\providecommand{\U}[1]{\protect\rule{.1in}{.1in}}
\newtheorem{theorem}{Theorem}
\newtheorem{lemma}[theorem]{Lemma}
\newtheorem{prop}[theorem]{Proposition}
\newtheorem{remark}[theorem]{Remark}
\newtheorem{problem}[theorem]{Problem}
\newtheorem*{condition}{Visibility condition}
\begin{document}
\title[Microlocally accurate solution of the inverse source problem]{Microlocally accurate solution of the inverse source problem of thermoacoustic tomography}
\author{M. Eller \and P. Hoskins \and L. Kunyansky}

\maketitle

\begin{abstract}
We consider the inverse source problem of thermo- and photoacoustic
tomography, with data registered on an open surface partially surrounding the
source of acoustic waves. Under the assumption of constant speed of sound we
develop an explicit non-iterative reconstruction procedure that recovers the
Radon transform of the sought source, up to an infinitely smooth additive error
term. The source then can be found by inverting the Radon transform.

Our analysis is microlocal in nature and does not provide a norm estimate on
the error in the so obtained image. However, numerical simulations show that
this error is quite small in practical terms.
We also present an asymptotically fast implementation of this procedure
for the case when the data are given on a circular arc in 2D.

\end{abstract}

\emph{Keywords:} Thermoacoustic tomography, wave equation, wave front, Radon transform

\section{Introduction}

We consider the inverse source problem for the free-space wave equation, with
measurements made on an open bounded surface. Such problems arise in the
thermo- and photoacoustic tomography (TAT/PAT)
\cite{Oraev94,KLFA-MP-95,KRK-MP-99} and in several other coupled-physics
modalities (see, for example, \cite{Wang2004,KK-AET-10,Widlak12}). The forward
problem can be modeled by the Cauchy problem for the standard wave equation in
$\mathbb{R}^{d}$
\begin{equation}
c^{2}(x)\Delta u(t,x)=u_{tt}(t,x),\quad t>0,\quad x\in\mathbb{R}^{d},
\label{E:originalwave}%
\end{equation}
with the initial condition%
\begin{equation}
u(0,x)=f(x),\qquad u_{t}(0,x)=0, \label{E:originalBC}%
\end{equation}
where, in the case of TAT/PAT, $u(t,x)$ represents the acoustic pressure in
the tissues, $c(x)\ $is the speed of sound, and the initial pressure $f(x)$
results from the thermoelastic effect. Function $f$ is assumed to be compactly supported
within a bounded region $\Omega_{0}$ that itself is a subset of a larger
bounded region $\Omega\supset\Omega_{0}$. Ideally, one would take the
measurements $g(t,z)$ defined as
\begin{equation}
g(t,z)\equiv u(t,z),\quad z\in\partial\Omega,\quad t\in(0,\infty),
\label{E:ideal-meas}%
\end{equation}
where $\partial\Omega$ is the boundary $\Omega.$ However, here we consider the
practically important case of measurements restricted to a subset $\Gamma$ of
$\partial\Omega;$ we will denote this data $g^{\mathrm{reduced}}(t,z)$:
\begin{equation}
g^{\mathrm{reduced}}(t,z)\equiv\left\{
\begin{array}
[c]{rr}%
g(t,z)=u(t,z), & z\in\Gamma\subset\partial\Omega,\\
0, & z\in\partial\Omega\backslash\Gamma,
\end{array}
\right.  \quad t\in(0,\infty). \label{E:measurements}%
\end{equation}
The two inverse source problems of interest to us consist of finding $f(x)$ from known
$c(x)$ and either $g(t,z)$ or $g^{\mathrm{reduced}}(t,z).$

The inverse source problem with complete data $g(t,z)$ is very well understood
by now. In particular, the so-called time-reversal procedure (applicable under
the non-trapping condition) is based on solving backwards in time the mixed
initial/boundary value problem for the wave equation in the domain
$(0,\infty)\times\Omega,$ with uniform initial conditions at the limit
$t\rightarrow\infty$ and the boundary values $g(t,z)$
\cite{Finch04,HKN-IP-08,AK,US}. Many other techniques have been developed for
the inverse source problem with complete data, including a variety of explicit
inversion formulas applicable if $c(x)$ is constant (we will not attempt to
overview these results here).

However, in practical situations one can measure data only on a subset
$\Gamma$ of $\partial\Omega,$ and has to work with $g^{\mathrm{reduced}}.$ For
such data very few reconstruction technique are presently known, most of them
iterative in nature. In the present paper under the assumption of the constant
speed of sound, we develop an explicit non-iterative image reconstruction
procedure that recovers $f(x)$ up to an infinitely smooth error term. Our
numerical experiments show that this error is also quite small from the
practical standpoint, although we are able to prove only the smoothness of it.

\begin{figure}[t]
\begin{center}
\includegraphics[scale = 0.8]{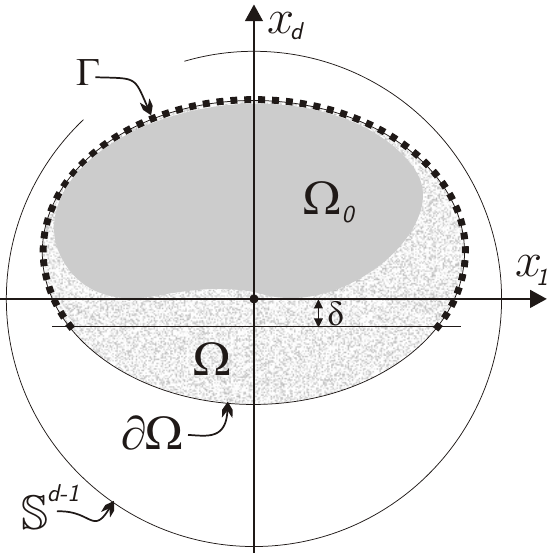}
\end{center}
\caption{Data acquisition geometry}%
\label{F:geometry}%
\end{figure}

We make the following assumptions about the data acquisition scheme. The
region $\Omega$ containing $\Omega_{0}$ is convex and has a smooth boundary
$\partial\Omega.$ Without further loss of generality we assume that $\Omega$
is contained inside the unit sphere $\mathbb{S}^{d-1}$ centered at the origin
(see figure \ref{F:geometry}), and that the constant speed of sound $c$ is
equal to 1. The support $\Omega_{0}$ of $f$ lies within $\Omega$ and above
the plane $x^{d}=0.$ The measurement surface $\Gamma$ is a subset of
$\partial\Omega$ containing all points lying above the plain $x^{d}=-2\delta,$
where $\delta$ is a fixed positive number:%
\[
\Gamma=\{x \ : \ x\in\partial\Omega\text{ and }x^{d}\geq-2\delta\}.
\]
Our goal is to solve

\begin{problem}
(Inverse source problem with reduced data). Find $f(x)$ given data
$g^{\mathrm{reduced}}(t,z)$ defined by equations (\ref{E:originalwave}),
(\ref{E:originalBC}) and (\ref{E:measurements}) $.$
\end{problem}

However, it will be helpful for us to start the discussion with

\begin{problem}
(Inverse source problem with complete data). Find $f(x)$ given data $g(t,z)$
defined by equations (\ref{E:originalwave}), (\ref{E:originalBC}), and
(\ref{E:ideal-meas}).
\end{problem}

The solvability and stability of the inverse problem with reduced data have
been thoroughly studied under various assumptions on the speed of sound
$c(x)\ $(see \cite{US,Finch04,HKN-IP-08}). Importantly, the stability of the
reconstruction is guaranteed under the so-called visibility condition
\cite{KuKuHand}, \ which for the case of constant speed of sound can be
formulated as follows

\begin{condition}
For every point $x\in\Omega_{0},$ every straight line passing through $x$
intersects $\Gamma$ at least once.
\end{condition}

Our data acquisition scheme satisfies this condition.

The most straightforward constructive approach to the problem with the reduced
data is to apply the standard time-reversal technique to $g^{\mathrm{reduced}%
}$ to obtain an approximation to $f.$ It is shown in~\cite{US} that, under the
visibility condition, such approximation is equivalent to applying a
pseudo-differential operator ($\psi$DO) of order zero to $f.$ A Neumann series
approach was used in \cite{US-Num} to invert such an operator; however,
convergence of this approach is not well understood theoretically. Instead of
the time reversal one can apply the operator adjoint to the operator defining
the direct map of to $g^{\mathrm{reduced}}$ \cite{Halt19}. This still leads to
expensive iterations, although they are guaranteed to converge eventually.

Non-iterative approaches to this problem include
\cite{PS1,PS2,Kun-open,Kun-Do}. The numerical algorithm \cite{Kun-open}
produces good results in 2D; however, it requires further theoretical
investigation and it might become expensive for 3D computations. The method
presented in \cite{Kun-Do} yields a theoretically exact reconstruction of $f,$
but only under a certain sub-optimal geometrical condition: the support of
$f$ must be significantly smaller than the region that otherwise satisfies the
visibility condition. Finally, in \cite{PS1,PS2} the source term $f$ is
reconstructed up to an error term given by a $\psi$DO of order $-1$ applied to
$f.$

In the preset paper we first develop a novel explicit procedure that yields
theoretically exact reconstruction of the Radon projections of $f$ if the
complete data $g$ are given. When applied to reduced data, this algorithm
produces the Radon projections accurate up to an infinitely smooth term. This
allows for an explicit reconstruction of $f$ up to a smooth error. In
Sections~\ref{S:wavefront} and ~\ref{S:Radon} we formulate and prove the main
theorems defining our method. An asymptotically fast algorithm for the
particular case of the circular arc acquisition curve in 2D is presented in
Section~\ref{S:algorithm}. In the last section we demonstrate the performance
of this approach in numerical simulations.

\section{Explicit reconstruction procedure}

\subsection{Theoretically exact reconstruction from complete data}

Let us first introduce an explicit reconstruction procedure that is
theoretically exact in the case of complete data. In other words, in this
section we are solving Problem 2.
\subsubsection{Exterior problem}

Unlike the classical approach that consists
in solving backwards in time the wave equation in a domain $(0,\infty
)\times\Omega,$ we first solve an exterior Dirichlet problem for the wave
equation in the domain $Q^{\mathrm{ext}}\equiv(0,\infty)\times(\mathbb{R}%
^{d}\backslash\bar{\Omega}).$ In order words, we find solution $u(t,x)$ of the
initial/boundary value problem%
\begin{equation}%
\begin{split}
u_{tt}(t,x)-\Delta u(t,x)  &  =0,\quad(t,x)\in Q^{\mathrm{ext}}\;,\\
u(0,x)  &  =0,\quad u_{t}(0,x)=0,\quad x\in\mathbb{R}^{d}\backslash\Omega,\\
u(t,z)  &  =g(t,z),\quad z\in\partial\Omega,\quad t\in(0,\infty).
\end{split}
\label{E:fulldataexterior}%
\end{equation}
Notice that the solution of this problem coincides on $Q^{\mathrm{ext}}$ with
the solution of the problem (\ref{E:originalwave}), (\ref{E:originalBC}),
which allows us to use the same notation $u(t,x)$ for both functions.

\subsubsection{Radon transform}
The Radon transform is the main tool we use. To make this paper self-contained we
collect in this section some basic facts about it.
Consider a continuous function $q(x)$ compactly
supported within some bounded region $\Omega^{\text{\textrm{large}}}%
\subset\mathbb{R}^{d}$ (within the ball $B^{d}(0,r)$ of radius $r$ centered at
the origin) and extended by zero to the rest of $\mathbb{R}^{d}$. For a
direction $\omega\in\mathbb{S}^{d-1},$ and $p\in\mathbb{R}$, the values
$h(\omega,p)$ of the Radon transform $\mathcal{R}q(\omega,p)$ are given by the
formula
\begin{equation}
h(\omega,p)=\left[  \mathcal{R}q\right]  (\omega,p)\equiv\int%
\limits_{\mathbb{R}^{d}}q(x)\delta(x\cdot\omega-p)dx,\quad p\in\mathbb{R},
\label{E:Radontr}%
\end{equation}
where $\delta(\cdot)$ is the Dirac's delta function (\cite{natterer-86}). The
Radon transform is thus defined on the cylinder $Z=\mathbb{S}^{d-1}%
\times\mathbb{R}$. The obvious but important property of $\mathcal{R}$ is
symmetry%
\begin{equation}
\left[  \mathcal{R}q\right]  (\omega,p)=\left[  \mathcal{R}q\right]
(-\omega,-p). \label{E:symmetry}%
\end{equation}

Let us introduce Sobolev norms on $\mathbb{R}^{d}$ and $Z$:%
\begin{align*}
||q||_{H^{s}(\mathbb{R}^{d})}^{2}  &  =\int\limits_{\mathbb{R}^{d}}%
(1+|\xi|^{2})|\hat{q}(\xi)|^{2}d\xi,\\
||h||_{H^{s}(Z)}^{2}  &  =\int\limits_{\mathbb{S}^{d-1}}\int%
\limits_{\mathbb{R}}(1+|\sigma|^{2})|\hat{h}(\omega,\sigma)|^{2}d\sigma d\omega
,
\end{align*}
where $\hat{q}(\xi)$ is the Fourier transform of $q(x)$ and $\hat{h}%
(\omega,\sigma)$ is the one-dimensional Fourier transform of $h(\omega,p)$ in
the second variable. Then there exist constants $c(s,d)$ and $C(s,d)$ (see
\cite{natterer-86}, Chapter 2, Section 5) such that%
\[
c(s,d)||q||_{H^{s}(\mathbb{R}^{d})}\leq||\mathcal{R}q||_{H^{s+(d-1)/2}(Z)}\leq
C(s,d)||q||_{H^{s}(\mathbb{R}^{d})}^{2}.
\]
Using these estimates, one extends the Radon transform to compactly supported
distributions on $\mathbb{R}^{d}.$

\subsubsection{Reconstruction procedure}
We will denote by
$F(\omega,p)$ the Radon transform of $f$%
\begin{equation}
F(\omega,p)\equiv\left[  \mathcal{R}f\right]  (\omega,p).
\label{E:Radon_projections}%
\end{equation}
Recall that $u(t,x)$ is the solution of (\ref{E:originalwave}),
(\ref{E:originalBC}) with the initial condition $f(x).$
Let us define the Radon projections $\left[  \mathcal{R}u\right]
(t,\omega,p)$ of $u(t,x)$ for each fixed $t\geq0$ by (\ref{E:Radontr}). It is
well known that, for a fixed $\omega,$ such projections solve the 1D wave
equation:%
\begin{equation}
\frac{\partial^{2}}{\partial t^{2}}\left[  \mathcal{R}u\right]  (t,\omega
,p)=\frac{\partial^{2}}{\partial p^{2}}\left[  \mathcal{R}u\right]
(t,\omega,p). \label{E:1Dwave}%
\end{equation}
The above equation is understood either in the classical sense or in the sense
of distributions, depending on the smoothness of $u$. Further, by taking into accout~(\ref{E:originalBC})
one can confirm that
\begin{equation}
\left[  \mathcal{R}u\right]  (0,\omega,p)=F(\omega,p),\quad\frac{\partial
}{\partial t}\left[  \mathcal{R}u\right]  (0,\omega,p)=0. \label{E:1Dinitial}%
\end{equation}
Then, the solution of the Cauchy problem (\ref{E:1Dwave}), (\ref{E:1Dinitial}) is
given by d'Alembert's formula,
\[
\lbrack\mathcal{R}u](t,\omega,p)=\frac{1}{2}\left[  F(\omega,p+t)+F(\omega
,p-t)\right]  .
\]
Since $f(x)$ is compactly supported in $B(0,1),$ its Radon projections
$F(\omega,p)$ are supported on $(-1,1)$ in $p$. Therefore, for $t=2,$
\[
\lbrack\mathcal{R}u](2,\omega,p)=\frac{1}{2}F(\omega,p-2),\quad p\in
\lbrack1,3],
\]
or
\[
F(\omega,p)=2[\mathcal{R}u](2,\omega,p+2),\quad p\in\lbrack-1,1].
\]
Thus, once $u(2,x)$ has been computed, one easily reconstructs the Radon
projections $F$ of $f$ from~$\mathcal{R}u.$

The methods for recovering a function from its Radon projections are
well-known \cite{natterer-86}. Consider the Hilbert transform $\mathcal{H}$
whose action a function of a 1D variable $h(t)$ is given by the formula%
\[
\left[  \mathcal{H}h\right]  (s)=\int\limits_{\mathbb{R}}\frac{h(t)}{s-t}dt,
\]
where the integral is understood in the principal value sense. The adjoint
$\mathcal{R}^{\#}$ of the Radon transform (\ref{E:Radontr}) is defined by its
action on a function $h(\omega,p)$ supported on $Z$ as follows%
\[
\lbrack\mathcal{R}^{\#}h](x)=\int\limits_{\mathbb{S}^{d-1}}h(\omega
,\omega\cdot x)d\omega.
\]
The inverse Radon transform $\mathcal{R}^{-1}$ is the left inverse of
$\mathcal{R}$, and is in general not unique. For the purposes of the present
paper we define $\mathcal{R}^{-1}$ as a composition of the differentiation in
$p,$ Hilbert transform $\mathcal{H}$ in the second variable ($p$), and the
adjoint Radon transform $\mathcal{R}^{\#}$ as follows (\cite{natterer-86},
Chapter 2, Section 2):
\begin{equation}
(\mathcal{R}^{-1}F)(x)\equiv\frac{1}{2(2\pi)^{d-1}}\cdot\left\{
\begin{array}
[c]{rr}%
(-1)^{\frac{d-2}{2}}\left[  \mathcal{R}^{\#}\mathcal{H}\frac{\partial^{d-1}%
}{\partial p^{d-1}}F\right]  (x), & d\text{ is even,}\\
(-1)^{\frac{d-1}{2}}\left[  \mathcal{R}^{\#}\frac{\partial^{d-1}}{\partial
p^{d-1}}F\right]  (x), & d\text{ is odd.}%
\end{array}
\right.  \label{E:Inverse_Radon}%
\end{equation}
If $F(\omega,p)=\left[  \mathcal{R}f\right]  (\omega,p),$ then
$f(x)=[\mathcal{R}^{-1}F](x).$ From the numerical standpoint, the problem of
reconstructing a function from its Radon projections has been studied
extensively, and efficient and accurate algorithms for implementing
(\ref{E:Inverse_Radon}) are well known by now \cite{natterer-86}.

It should be noted that the image reconstruction based on the Radon transform
of the solution of the wave equation has been considered recently in
\cite{HaltFull19}. In that work, a non-traditional data acquisition scheme
yields the values of the whole acoustic field $u(t,x)$, instead of boundary
values $g(t,z)$. The problem we are solving here is different, but our
technique bears some resemblance of the methods of~\cite{HaltFull19}.

From the numerical standpoint it may be preferable to compute the derivative
of the Radon projections
\[
\frac{\partial}{\partial p}F(\omega,p)=\frac{\partial}{\partial p}%
2[\mathcal{R}u](2,\omega,p+2)=-2\left[  \mathcal{R}\frac{\partial u}{\partial
t}\right]  (2,\omega,p+2).
\]
Formulas (\ref{E:Inverse_Radon}) can be easily modified to process
$\frac{\partial}{\partial p}F$ instead of $F,$ since the Hilbert transform
commutes with differentiation. However, for simplicity of the presentation we
will concentrate on reconstruction of $F.$


\subsection{Reconstruction from reduced data}

\label{S:reduced-reco}

In order to deal with Problem 1, we first solve a modified initial/boundary
problem in $Q^{\mathrm{ext}}$ as follows. For a fixed $\delta>0$ (see Figure
1) we introduce a function $\psi\in C^{\infty}({\mathbb{R}})$ such that
$0\leq\psi\leq1$, $\psi=1$ for $\tau\geq-\delta$, and $\psi=0$ for $\tau
\leq-2\delta$. We define the function $h\in H^{s}((0,\infty)\times
\partial\Omega)$ by setting
\[
h(t,x)\equiv\psi(x_{3})g^{\mathrm{reduced}}(t,x)=\psi(x_{3})g(t,x).
\]

Consider now the exterior initial/boundary problem with homogeneous initial
data and Dirichlet data $h$ for the wave equation:
\begin{equation}%
\begin{split}
w_{tt}(t.x)-\Delta w(t,x)  &  =0,\quad(t,x)\in Q^{\mathrm{ext}},\\
w(0,x)  &  =0,\quad w_{t}(0,x)=0,\quad x\in R^{d}\backslash\bar{\Omega},\\
w(t,z)  &  =h(t,z),\quad(t,z)\in(0,\infty)\times\partial\Omega.
\end{split}
\label{E:Eller2}%
\end{equation}
For $h\in H^{s}((0,\infty)\times\partial\Omega)$, $s \in\mathbb{R}$,
there exists a unique solution $w\in
H^s_{\mathrm{loc}}(Q^{\mathrm{ext}})$. This result can be proved using the
techniques developed in \cite[Section 24.1]{hormander85}. The local
existence theorem (Lemma 24.1.6) is restricted to non-negative $s$ only
because of a right-hand side term which vanishes in our case. Furthermore,
if $h \in L^2((0,\infty)\times \partial \Omega)$, then the solution of our
initial-boundary value problem is a continuous in time,
that is $w\in C([0,\infty),L^2(\mathbb{R}^d\setminus \Omega))$ \cite{llt86}.

Solution
$w(t,x)$ serves as a crude approximation of the exact solution $u(t,x)$
(compare this problem to problem (\ref{E:fulldataexterior})).
Let us consider $w(t,x)$ at $t=2$. Due to the
finite (unit) speed of propagation of waves, $w(2,x)$ is finitely supported
within the ball of radius $3$ centered at the origin. Then, the (exterior)
Radon transform $[\mathcal{R}w](2,\omega,p)$ of $w(2,x)$ can be defined by
(\ref{E:Radontr}) for $1\leq|p|\leq3.$ We only use a subset of these data and
define $\tilde{F}(\omega,p)$ as follows:%
\begin{equation}
\tilde{F}(\omega,p)\equiv\lbrack\mathcal{R}w](2,\omega,p+2),\quad p\in
\lbrack-1,1],\quad\omega\in\mathbb{S}^{d-1}. \label{E:Ftilde}%
\end{equation}
When considered on the whole sphere $\mathbb{S}^{d-1}$ (in $\omega)$ the function
$\tilde{F}(\omega,p)$ is a rather poor approximation to $F(\omega,p).$
However, if one considers only the subset $\mathbb{S}^{d-1}_{+}\equiv\{\omega
\in\mathbb{S}^{d-1}\;:\;w_{d}>-\delta/2\},$ one finds that $\tilde{F}(\omega,p)$
is equal to $F(\omega,p)$ modulo infinitely smooth function of $(\omega,p).$
This fact is proven in the Theorem~\ref{T:main} in~Section~\ref{S:Radon} of this paper.

Motivated by the symmetry relation (\ref{E:symmetry}), we define $G(\omega,p)$
as follows%
\begin{equation}
G(\omega,p)=\left\{
\begin{array}
[c]{rr}%
\tilde{F}(\omega,p), & \omega_{d}\geq0,p\in\lbrack-1,1],\\
\tilde{F}(-\omega,-p), & \omega_{d}<0,p\in\lbrack-1,1].
\end{array}
\right.  \label{E:G}%
\end{equation}
We use $G(\omega,p)$ as an approximation for $F(\omega,p)$ and apply the
inverse Radon transform $\mathcal{R}^{-1}$ (defined by (\ref{E:Inverse_Radon}%
)) to $G(\omega,p)$ to obtain an approximation $\tilde{f}(x)$ of $f(x)$:%

\[
\tilde{f}(x)=\left[  \mathcal{R}^{-1}G\right]  (x).
\]
One can show that $\tilde{f}(x)=f(x)+\kappa(x),$ where $\kappa(x)$ is a
$C^{\infty}(B(0,1)).$ This is the main theoretical result of the present
paper, given by Proposition~\ref{T:final} in Section~\ref{S:Radon}. Although
our analysis is microlocal in nature, and does not yield a norm estimate on
the error $\kappa(x),$ our numerical simulations show that it is quite small
in practical terms.

Our general reconstruction procedure consists of the following steps:

\begin{enumerate}
\item Solve the initial/boundary value problem~(\ref{E:Eller2}) on time
interval $[0,2]$;

\item Compute approximate projections $\tilde{F}$ using~(\ref{E:Ftilde}), and
find $G$ using~(\ref{E:G});

\item Compute $\mathcal{R}^{-1}G$ using~(\ref{E:Inverse_Radon}), to obtain $\tilde{f}$
.
\end{enumerate}

Theoretical foundations of our technique are presented in
Sections~\ref{S:wavefront} and~\ref{S:Radon} below.

Computational efficiency of our scheme depends on particular algorithms used
on each of the steps. On the first step, known methods for solving the exterior initial/boundary
value problems can be used, including, in particular, finite difference schemes~\cite{strikwerda2004} and boundary
layer techniques~\cite{sayas2016}. The second step is
straightforward. The last step, inversion of the Radon transform, is
well-studied by now, and efficient implementations of the
filtration/backprojection algorithm~(\ref{E:Inverse_Radon}) are known.

However, for the geometries that allow for separation of variables in the wave equation, it may be
possible to have a faster implementations of steps 2 and 3. In
Section~\ref{S:algorithm} we present a very fast algorithm applicable when the
acquisition surface is a circular arc (in 2D). Finally, the last section of
the paper illustrates our findings by the results of numerical simulations.

\section{The wave front set of the Dirichlet trace of a wave on a sphere}\label{S:wavefront}

Let us analyze the propagation of singularities in the solution $u(t,x)$ of
the problem (\ref{E:originalwave}), (\ref{E:originalBC}), with $c(x)\equiv1$
and $\mathrm{supp}\,f=\Omega_{0}.$ The solution for this Cauchy problem is
given by the formula
\begin{equation}%
\begin{split}
u(t,x)  &  =\frac{1}{(2\pi)^{3}}\int_{\Omega_{0}}\int_{{\mathbb{R}}^{3}%
}e^{\mathrm{i}[(x-y)\cdot\xi+t|\xi|]}f(y)\,d\xi dy+\frac{1}{(2\pi)^{3}}%
\int_{\Omega_{0}}\int_{{\mathbb{R}}^{3}}e^{\mathrm{i}[(x-y)\cdot\xi-t|\xi
|]}f(y)\,d\xi dy\\
&  =\frac{1}{(2\pi)^{3}}\int_{\Omega_{0}}\int_{{\mathbb{R}}^{3}}[e^{t|\xi
|}+e^{-\mathrm{i}t|\xi|}]e^{\mathrm{i}(x-y)\cdot\xi}f(y)\,d\xi dy.
\end{split}
\label{E:Eller1}%
\end{equation}
Consider the mapping $\mathcal{A}:f\rightarrow g$ where
$g=u\big|_{{\mathbb{R}}_{+}\times\partial\Omega}$. This operator is a sum
of two Fourier integral operators. Since
$\Omega_{0}$ is a compact subset of $B(0,1)$, the operator $\mathcal{A}$ has a
non-degenerate phase function and the mapping $f\rightarrow g$ is continuous
from $H_{0}^{s}(\Omega_{0})$ into $H^{s}(\mathbb{R}_{+}\times\partial\Omega)$
for $s\in{\mathbb{R}}$, see for example \cite{US}. Hence, for $f\in H^{s}%
_{0}(\Omega_{0})$ we obtain $h\in H^{s}((0,\infty)\times\partial\Omega)$,
where $h$ is the function introduced at the beginning of the previous subsection.

At first we will study the wave front set of the Dirichlet trace $g$. In what
follows we abbreviate ${\mathbb{R}}_{+}\equiv(0,\infty)$. Since $g$ is defined
on ${\mathbb{R}}_{+}\times\partial\Omega$ its wave front set is a subset of
$T^{\ast}({\mathbb{R}}_{+}\times\partial\Omega)$, the cotangent bundle of the
surface $\mathbb{R}_{+}\times\partial\Omega$. The first result is of a more
general nature.

\begin{lemma}
\label{L:trace} Let $p\in H^{s}({\mathbb{R}}^{d})$ and let $S$ be a smooth
hypersurface in ${\mathbb{R}}^{d}$. Suppose that $\mathrm{WF}(p)\cap
N(S)=\emptyset$ where $N(S)$ is the conormal bundle of $S$. Then, the trace
operator $p\rightarrow q=p\Big|_{S}$ extends to a continuous operator from
$H^{s}({\mathbb{R}}^{d})$ to $H^{s-1/2}(S)$. Furthermore,
\[
\mathrm{WF}(q)\subset\{(x,\xi-(\xi\cdot\nu_{x})\nu_{x})\in T^{\ast}%
S\setminus0\;:\;(x,\xi)\in\mathrm{WF}(p)\}\;.
\]

\end{lemma}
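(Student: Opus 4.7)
The plan is to reduce the statement to a local model where $S$ is a coordinate hyperplane, then exploit the wave front condition via a microlocal decomposition of $p$ that separates conormal frequencies (where $p$ is smooth by hypothesis) from tangential ones (where the classical trace estimate applies).

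First, using a smooth partition of unity subordinate to coordinate patches in which $S$ becomes the hyperplane $\{x_{d}=0\}$, I would reduce to a distribution $p$ compactly supported near a point of $S$. Writing $x=(x',x_{d})$ and $\xi=(\xi',\xi_{d})$, the conormal bundle $N(S)$ corresponds to $\{\xi'=0,\ \xi_{d}\neq 0\}$, and the trace formally satisfies
\[
\widehat{q}(\xi')=\frac{1}{2\pi}\int \widehat{p}(\xi',\xi_{d})\,d\xi_{d}.
\]
Next, I would choose a homogeneous-of-degree-zero cutoff $\phi(\xi)$ that equals $1$ in a narrow conic neighborhood of $N(S)$ (where $|\xi'|\ll|\xi_{d}|$) and vanishes outside a slightly wider cone, and split $p=p_{1}+p_{2}$ with $\widehat{p}_{1}=\phi\widehat{p}$ and $\widehat{p}_{2}=(1-\phi)\widehat{p}$. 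The hypothesis $\mathrm{WF}(p)\cap N(S)=\emptyset$, together with compact support, forces $p_{1}$ to be smooth near $S$, so its trace lies in $C^{\infty}$. For $p_{2}$, $\widehat{p}_{2}$ is supported in the tangential cone $\{|\xi_{d}|\lesssim|\xi'|\}$, where $1+|\xi|^{2}\sim 1+|\xi'|^{2}$. A Cauchy--Schwarz estimate in the $\xi_{d}$-integral over this cone then yields $\|q_{2}\|_{H^{s-1/2}(S)}\lesssim\|p_{2}\|_{H^{s}(\mathbb{R}^{d})}$, which gives the continuity part of the lemma even for $s \leq 1/2$.

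Finally, I would establish the wave front inclusion by contraposition. Fix $(x_{0},\xi_{0}')\in T^{*}S\setminus 0$ and suppose no $(\xi_{0}',\xi_{d})$ with $\xi_{d}\in\mathbb{R}$ lies in $\mathrm{WF}(p)$ over $x_{0}$; combined with the global hypothesis $\mathrm{WF}(p)\cap N(S)=\emptyset$, this also rules out the limiting normal directions obtained as $\xi_{d}\to\pm\infty$. Since the corresponding arc of unit directions is compact and $\mathrm{WF}(p)$ is closed and conic, a separation argument yields an open conic neighborhood $V\subset T^{*}\mathbb{R}^{d}\setminus 0$ of this arc with $V\cap\mathrm{WF}(p)=\emptyset$ near $x_{0}$. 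After localizing by a cutoff $\chi$ around $x_{0}$, $\widehat{\chi p}(\xi)$ decays rapidly on $V$. Splitting the $\xi_{d}$-integral defining $\widehat{\chi q}(\xi')$ into its part inside $V$ (rapidly decaying by the wave front assumption) and its part outside $V$ (controlled by the tangential Cauchy--Schwarz bound established above) gives rapid decay of $\widehat{\chi q}(\xi')$ uniformly in a small conic neighborhood of $\xi_{0}'$, so $(x_{0},\xi_{0}')\notin\mathrm{WF}(q)$.

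The main obstacle I expect is in the last step: carefully verifying the uniformity of the separation argument and ensuring that the limiting normal directions, which are covered precisely by the global hypothesis $\mathrm{WF}(p)\cap N(S)=\emptyset$, are indeed contained in the conic neighborhood $V$. Once this is arranged, the Sobolev estimate is a routine adaptation of the classical trace inequality restricted to the tangential frequency cone, and the globalization via the partition of unity from the first step is standard.
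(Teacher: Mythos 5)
Your proposal follows essentially the same route as the paper: flatten $S$ locally, use the restriction formula $\hat q(\xi')=\frac{1}{2\pi}\int_{\mathbb{R}}\hat p(\xi',\xi_d)\,d\xi_d$, exploit the conormal hypothesis to control the frequencies near $\pm e_d$, and estimate the localized Fourier transform directly for the wave front inclusion. The one genuine difference is in the Sobolev part: the paper verifies that $\varphi p$ lies in H\"ormander's anisotropic space $H_{(1,s-1)}$ and cites Theorem B.1.11 of \cite{hormander85}, whereas you carry out the underlying Cauchy--Schwarz estimate on the tangential cone $\{|\xi_d|\lesssim|\xi'|\}$ by hand; these are the same computation, yours being self-contained and the paper's shorter by citation. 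One caution on your last step: the contribution to $\int\widehat{\chi p}(\eta',\xi_d)\,d\xi_d$ from the region \emph{outside} $V$ cannot be handled by the tangential Cauchy--Schwarz bound if you want pointwise rapid decay of $\widehat{\chi q}$ on a cone around $\xi_0'$ --- that bound only yields a weighted $L^2$ estimate, which does not exclude a point from the wave front set. The correct resolution, which your own ``main obstacle'' remark essentially anticipates, is that $V$ can be chosen to contain the entire fiber $\{(\eta',\xi_d):\eta'\in U,\ \xi_d\in\mathbb{R}\}$ over a sufficiently narrow cone $U$ around $\xi_0'$ (up to a compact set near the origin), precisely because the conormal hypothesis closes up the arc of directions at $\pm e_d$; then there is no ``outside'' part and the whole integral is rapidly decaying, which is how the paper's proof proceeds.
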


Here $\nu_{x}$ is a unit normal vector at $x\in S$ and $\xi-(\xi\cdot\nu
_{x})\nu_{x}$ is the orthogonal projection of $\xi$ onto the cotangent space
of $S$ at $x$.

\begin{proof}
The first statement of the lemma is just the trace theorem in Sobolev spaces
provided $s>1/2$. The condition on the wave front set allows us to extend the
trace theorem to all $s\in{\mathbb{R}}$. The concept of the wave front set is
independent of the choice of coordinates. Locally, we may always assume that
$S=\{x_{d}=0\}$. In this case we will write $x^{\prime}=(x_{1},...,x_{d-1})$
and correspondingly, we have $\xi=(\xi^{\prime},\xi_{d})$. In local
coordinates, the second statement of the lemma is
\[
\mathrm{WF}(q)=\{(x^{\prime},\xi^{\prime}) \in T^{\ast}{\mathbb{R}}%
^{d-1}\setminus0\;:\;(x^{\prime},0;\xi^{\prime},\xi_{d})\in\mathrm{WF}%
(p)\}\;,
\]
and the conormal bundle is
\[
N(S)=\{(x^{\prime},0;0,\eta_{d})\in T^{\ast}{\mathbb{R}}^{d}\setminus 0\;:\;(x^{\prime
},0)\in S\}\;.
\]
Fix $x^{\prime}\in S$. Since $\mathrm{WF}(p)\cap N(S)=\emptyset$ we know that
there exists a $\varphi\in C_{0}^{\infty}({\mathbb{R}}^{d})$ satisfying
$\varphi(x^{\prime},0)=1$, such that for all $N\in{\mathbb{N}}$ and $\xi$ in a
conic neighborhoods of $e_{d}$ and $-e_{d}$, we have
\[
|\widehat{\varphi p}(\xi)|\leq C_{N}(1+|\xi|)^{-N}\;.
\]
Furthermore, away from this conic neighborhood, we have $|\xi_{d}|\lesssim
|\xi^{\prime}|$. For $s\leq1/2$, we have $\varphi p\in H_{(1,s-1)}%
({\mathbb{R}}^{d})$. For a definition of these Sobolev spaces we refer to
Definition B.1.10 \cite{hormander85}. This local result can be made global and
yields $p\in H_{(1,s-1)}({\mathbb{R}}^{d})$. Using Theorem B.1.11
\cite{hormander85}, we infer that $q\in H^{s-1/2}(S)$. Recall that for
$p\in\mathcal{S}({\mathbb{R}}^{d})$ we have
\[
q(x^{\prime})=p(x^{\prime},0)=\frac{1}{(2\pi)^{d}}\int_{{\mathbb{R}}^{d}%
}e^{\mathrm{i}x^{\prime}\cdot\xi^{\prime}}\hat{p}(\xi)\,d\xi\;,
\]
and hence,
\begin{equation}
\hat{q}(\xi^{\prime})=\frac{1}{2\pi}\int_{\mathbb{R}}\hat{p}(\xi^{\prime}%
,\xi_{d})\,d\xi_{d}\;. \label{E:Eller4}%
\end{equation}
By continuity this last formula extends to Fourier transforms which are
integrable in $\xi_{d}$ with values in $\mathcal{S}^{\prime}({\mathbb{R}}%
^{d})$. Now suppose that the points $(x^{\prime},0;\xi^{\prime},\xi_{d}%
)\notin\mathrm{WF}(p)$ for all $\xi_{d}\in{\mathbb{R}}$. By the definition of
the wave front set, there exists a function $\varphi\in C_{0}^{\infty
}({\mathbb{R}}^{d})$ such that $\varphi(x^{\prime},0)=1$ and a conic
neighborhood of $U(\xi^{\prime})$ in ${\mathbb{R}}^{d-1}$ such that for all
$N\in{\mathbb{N}}$
\[
(\widehat{\varphi p})(\eta^{\prime},\xi_{d})\leq C_{N}(1+|\eta^{\prime}|^{2}
+|\xi_{d}|^{2})^{-N}\quad\mbox{ for all }\eta^{\prime}\in U(\xi^{\prime
})\mbox{ and }\xi_{d}\in{\mathbb{R}}\;.
\]
Now define $\psi\in C_{0}^{\infty}(S)$ by $\psi(y^{\prime})=\varphi(y^{\prime
},0)$. Then we know that for all $\eta^{\prime}\in U(\xi^{\prime})$
\[
|(\widehat{\psi q})(\eta^{\prime})|\leq\frac{1}{2\pi}\int_{\mathbb{R}%
}|\widehat{\varphi p}(\eta^{\prime},\xi_{d})|\,d\xi_{d}\leq\frac{C_{N}}{2\pi
}\int_{\mathbb{R}}(1+|\eta^{\prime}|^{2}+\xi_{d}^{2})^{-N}\,d\xi_{d}\leq
C_{N}^{\prime}(1+|\xi^{\prime}|^{2})^{-N+1/2}\;,
\]
where $C_{N}^{\prime}=C_{N}\int_{\mathbb{R}}(1+s^{2})^{-N}\,ds/(2\pi)$. Hence,
we conclude that $(x^{\prime},\xi^{\prime})\notin\mathrm{WF}(q)$ and conclude
that
\[
\mathrm{WF}(q)\subset\{(x^{\prime},\xi^{\prime}) \in T^{\ast}{\mathbb{R}%
}^{d-1}\setminus0\;:\;(x^{\prime},0;\xi^{\prime},\xi_{d})\in\mathrm{WF}%
(p)\}\;.
\]

\end{proof}

The wave front set of the solution $u$ to (\ref{E:Eller1}) can be described
very precisely. Over every point $(x,\eta)\in\mathrm{WF}(f)$ there are two
null bicharacteristics, that is lines in $\gamma(s)=(t(s),x(s);\tau(s),\eta(s))\subset T^{\ast}%
{\mathbb{R}}^{d+1}$, $s\in{\mathbb{R}}$, which satisfy the following initial
value problem
\[
\dot{t}=-2\tau,\quad\dot{x}=2\eta,\quad\dot{\tau}=0,\quad\dot{\eta}=0,\qquad
t(0)=0,\quad x(0)=x,\quad\tau(0)=\pm|\eta|,\quad\eta(0)=\eta\;.
\]
Note that the initial condition for $\tau$ is inferred from the fact that null
bicharacteristics are in the characteristic variety. One can use $t$ as a
parameter and obtain the following solutions
\begin{equation}
\label{E:bic}\gamma_{+}(t)=(t,x+t\eta/|\eta|;-|\eta|,\eta)\quad\mbox{and}\quad
\gamma_{-}(s)=(t,x-t\eta/|\eta|;|\eta|,\eta)\;,\qquad t\in{\mathbb{R}}\;.
\end{equation}
Note that $\gamma_{+}$ has the direction vector $(|\eta|,\eta)$ for the first
$d+1$ components where as $\gamma_{-}$ has the direction $(|\eta|,-\eta)$.

Since we are interested in the forward problem, we will restrict ourselves to
$t\geq0$. The wave front set of $u$ consists exactly of all bicharacteristics
whose initial data are in the wave front set of $f$, see for example
\cite[Chapter XXIII]{hormander85}. The next result describes the wave front
set of the trace of the wave $u$ along $\partial\Omega$.

\begin{prop}
\label{T:Ellera} Let $f\in H^{s}(\Omega_{0})$ for some $s\in{\mathbb{R}}$ and
let $u\in C([0,\infty),H^{s}({\mathbb{R}}^{d}))$ be the unique solution to
(\ref{E:originalwave})-(\ref{E:originalBC}) and $g=u\Big|_{{\mathbb{R}}%
_{+}\times\partial\Omega}$. Then,
\[%
\begin{split}
\mathrm{WF}(g)\subset &  \{(|y-x|,y;\mp\alpha|y-x|,\pm\alpha[y-x-((y-x)\cdot
\nu_{y})\nu_{y}])\in T^{\ast}({\mathbb{R}}_{+}\times\partial\Omega
)\setminus0\;:\\
&  y\in\partial\Omega,\alpha>0\mbox{ and }(x,\pm\lbrack y-x])\in
\mathrm{WF}(f)\}\;.
\end{split}
\]
Furthermore, every point in $\mathrm{WF}(g)$ is a hyperbolic point. This is to
say that $(t,y;\tau,\xi)\in\mathrm{WF}(g)$ implies $|\tau|>|\xi|$.
\end{prop}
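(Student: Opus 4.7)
The plan is to determine $\mathrm{WF}(u)$ on $\mathbb{R}_{+}\times\mathbb{R}^{d}$ first, and then apply Lemma~\ref{L:trace} to obtain $\mathrm{WF}(g)$ by tangential projection onto $\partial\Omega$. The first step is read off directly from the representation~(\ref{E:Eller1}): $u$ is a sum of two Fourier integral operators with phases $\phi_{\pm}(x,t,y,\xi)=(x-y)\cdot\xi\pm t|\xi|$. The stationary phase condition $\partial_{\xi}\phi_{\pm}=0$ yields $y=x\pm t\xi/|\xi|$, while the differentials in $(x,t)$ give the space-time covector $(\xi,\pm|\xi|)$. Matching this with~(\ref{E:bic}), the first term propagates singularities of $f$ along $\gamma_{-}$ and the second along $\gamma_{+}$, so for every $(x,\eta)\in\mathrm{WF}(f)$ the set $\mathrm{WF}(u)$ is contained in the union of the two forward rays $\{\gamma_{\pm}(t):t\geq 0\}$.

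Next, I would intersect this set with the cotangent bundle of $\mathbb{R}_{+}\times\partial\Omega$ and invoke Lemma~\ref{L:trace}. A point of $\gamma_{\pm}$ lies over $\partial\Omega$ exactly when $x\pm t\eta/|\eta|=y\in\partial\Omega$, which forces $t=|y-x|$ and $\eta=\pm\alpha(y-x)$ with $\alpha:=|\eta|/|y-x|>0$; correspondingly, $(\tau,\xi)=(\mp\alpha|y-x|,\pm\alpha(y-x))$. The hypothesis of Lemma~\ref{L:trace} is trivial to verify: the conormal bundle of $\mathbb{R}_{+}\times\partial\Omega$ consists of covectors of the form $(0,\mu\nu_{y})$ with $\mu\neq 0$, whereas every covector of $\mathrm{WF}(u)$ has $|\tau|=|\eta|>0$. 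The lemma then replaces the spatial covector $\xi=\pm\alpha(y-x)$ by its tangential projection $\xi-(\xi\cdot\nu_{y})\nu_{y}=\pm\alpha[(y-x)-((y-x)\cdot\nu_{y})\nu_{y}]$, producing exactly the set displayed in the proposition.

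Finally, the hyperbolicity assertion reduces to the pointwise identity
\[
|\tau|^{2}-\bigl|\xi-(\xi\cdot\nu_{y})\nu_{y}\bigr|^{2}=\alpha^{2}|y-x|^{2}-\alpha^{2}\bigl(|y-x|^{2}-((y-x)\cdot\nu_{y})^{2}\bigr)=\alpha^{2}\bigl((y-x)\cdot\nu_{y}\bigr)^{2},
\]
which is automatically nonnegative; the covector therefore lies in the closed hyperbolic cone, and strict positivity amounts to showing $(y-x)\cdot\nu_{y}\neq 0$. This last step, which I expect to be the main obstacle, is a geometric consequence of the (strict) convexity of $\Omega$ with smooth boundary: if $y-x$ were tangent to $\partial\Omega$ at $y$, then the tangent hyperplane at $y$, which is a supporting hyperplane of the convex body $\bar{\Omega}$, would pass through the interior point $x\in\Omega_{0}\subset\Omega$, contradicting the fact that a supporting hyperplane of a convex body with smooth boundary meets that body only on the boundary. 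Making this nondegeneracy rigorous under the precise geometric hypotheses of the paper is the only nontrivial point in the argument.
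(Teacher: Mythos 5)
Your proposal is correct and follows essentially the same route as the paper: identify $\mathrm{WF}(u)$ from propagation of singularities along the two families of bicharacteristics, check that it misses the conormal bundle of ${\mathbb{R}}_{+}\times\partial\Omega$ so that Lemma~\ref{L:trace} applies, and deduce hyperbolicity from the fact that a ray from an interior point $x\in\Omega_{0}$ cannot be tangent to the boundary of the convex set $\Omega$ at $y$. Your supporting-hyperplane argument for $(y-x)\cdot\nu_{y}\neq 0$ is just a more explicit rendering of the paper's remark that no tangential bicharacteristic can meet $\Omega_{0}$, and it needs only convexity plus $x$ being an interior point, not strict convexity.
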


\begin{proof}
We know that
\begin{equation}
\label{E:Eller8}\mathrm{WF}(u) =\{ (t,x\pm t \eta/|\eta|, \mp|\eta|, \eta)
\;:\; t\ge0 \mbox{ and } (x,\eta) \in\mathrm{WF}(f)\}\subset T^{*}{\mathbb{R}%
}^{d+1}_{+} \setminus0
\end{equation}
and observe that $\mathrm{WF}(u) \cap N({\mathbb{R}}_{+} \times\partial
\Omega)=\emptyset$. Hence, the previous lemma is applicable.

If $y\in\partial\Omega$, then $y=x\pm t\eta/|\eta|$ implies that $\pm
\alpha(y-x)=\eta$ for some $\alpha>0$ and $t=|y-x|$. Hence,
\begin{multline*}
\mathrm{WF}(u)\cap T_{{\mathbb{R}}_{+}\times\partial\Omega}^{\ast}{\mathbb{R}%
}^{d+1}\\
=\{(|y-x|,y;\mp\alpha|y-x|,\pm\alpha(y-x))\;:\;y\in\partial\Omega
,\alpha>0\mbox{ and }(x,\pm\lbrack y-x])\in\mathrm{WF}(f)\}
\end{multline*}
Now we apply Lemma~\ref{L:trace} and the proof of the first statement is finished.

For the second statement, let $(t,y;\tau,\xi)\in\mathrm{WF}(g)$. Then there
exists a point $(t,y;\tau,\eta)\in\mathrm{WF}(u)$ where
\[
\xi=\eta-(\eta\cdot\nu_{y})\nu_{y} \quad\mbox{ and }\quad|\tau|=|\eta|\;.
\]
Since the support $\Omega_{0}$ is a compact subset of the convex set $\Omega$, no
bicharacteristic tangential to ${\mathbb{R}}_{+}\times\partial\Omega$ can meet
$\Omega_{0}$. Hence, $\eta\notin T_{y}^{\ast}\mathbb{S}^{d-1}$ and thus $|\xi
|<|\eta|=|\tau|$.
\end{proof}

\begin{remark}
The inclusion in Proposition \ref{T:Ellera} can be replaced by an equal sign.
This follows from the fact that the operator $\mathcal{A}$ is a sum of two elliptic
Fourier integral operators.
\end{remark}

Now comes the second step. The unique solution $w(t,x)$ to the exterior
problem (\ref{E:Eller2}) has the following interesting property. The wave
fronts of $u$ and $w$ are the same in the exterior upper half space
\[
D_{+}=\{x\in\mathbb{R}^{d} \;:\;x_{d}\ge-\delta\}\;.
\]
More precisely, we have
\begin{theorem}
\label{T:Ellerb}
For $f\in H^s_0(\Omega_0)$, suppose that $u$ is the solution to the Cauchy problem (\ref{E:originalwave})-(\ref{E:originalBC}) and that $w$ is the solution to the exterior initial-boundary value problem (\ref{E:Eller2}). Then
\[
 \mathrm{WF}(u-w)\cap T^{\ast} \{(t,x)\in Q^\mathrm{ext}\;:\; x_d\ge -\delta\}= \emptyset
\]
and
\[
 \mathrm{WF}(u-w)(2,\cdot) \cap T^{\ast}D_+= \emptyset\;.
\]
\end{theorem}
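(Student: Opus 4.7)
The plan is to set $v=u-w$ on $Q^{\mathrm{ext}}$ and propagate singularities from its boundary data. First I would observe that $v$ solves the wave equation on $Q^{\mathrm{ext}}$ with zero Cauchy data, since $u(0,\cdot)=f$ vanishes in $\mathbb{R}^d\setminus\bar\Omega$ (because $\mathrm{supp}\,f=\Omega_0\subset\Omega$) and $w$ has zero Cauchy data by construction. The Dirichlet trace is $v\big|_{\mathbb{R}_+\times\partial\Omega}=g-h=(1-\psi(z_d))g(t,z)$, and since multiplication by the smooth cutoff $1-\psi$ does not enlarge the wave front set we have
\[
\mathrm{WF}\bigl((1-\psi)g\bigr)\subset \mathrm{WF}(g)\cap T^{*}\bigl(\mathbb{R}_{+}\times\{z\in\partial\Omega:z_d\le-\delta\}\bigr).
\]

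Next I would invoke propagation of singularities for the exterior Dirichlet problem with distributional boundary data (Melrose--Sj\"ostrand; the $H^s_{\mathrm{loc}}$ well-posedness already noted for~(\ref{E:Eller2}) provides the required functional framework). By the second assertion of Proposition~\ref{T:Ellera}, every point of $\mathrm{WF}(g)$ is hyperbolic, and hence so is every point of $\mathrm{WF}((1-\psi)g)$; at each such boundary wave-front point $v$ emits a single outgoing null bicharacteristic into $Q^{\mathrm{ext}}$, and no glancing rays arise. Strict convexity of $\Omega$ ensures these outgoing straight rays never meet $\partial\Omega$ again, so there are no reflected contributions to track, and every point of $\mathrm{WF}(v)$ lies on a forward null ray emerging from $\mathrm{WF}((1-\psi)g)$.

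The last step is a direct geometric computation. A wave-front point of $(1-\psi)g$ above $(t_0,y_0)$ has $y_{0,d}\le-\delta$ and, by Proposition~\ref{T:Ellera}, corresponds to a source $x_0\in\Omega_0\subset\{x_d>0\}$; the outgoing ray in $Q^{\mathrm{ext}}$ is the line $x(t)=y_0+(t-t_0)(y_0-x_0)/|y_0-x_0|$, whose $d$-th coordinate is
\[
x(t)_d = y_{0,d} + (t-t_0)\,\frac{y_{0,d}-x_{0,d}}{|y_0-x_0|}\le y_{0,d}\le -\delta\quad\text{for all }t\ge t_0,
\]
with strict inequality for $t>t_0$. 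Since $y_0\in\partial\Omega$ itself is not a point of $Q^{\mathrm{ext}}$, this proves the first claim. Restricting to $t=2$ yields the second: $v$ is $C^\infty$ on the open set $\{(t,x)\in Q^{\mathrm{ext}}:x_d>-\delta\}$, so $v(2,\cdot)$ is smooth at every $x\in D_+\cap(\mathbb{R}^d\setminus\bar\Omega)$ and $\mathrm{WF}\bigl((u-w)(2,\cdot)\bigr)\cap T^{*}D_+=\emptyset$.

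The main difficulty is the clean invocation of propagation of singularities for the exterior Dirichlet problem at arbitrary Sobolev regularity; the hyperbolicity from Proposition~\ref{T:Ellera} sidesteps the delicate glancing regime, convexity of $\Omega$ eliminates reflections, and the remainder of the argument reduces to elementary monotonicity of the $d$-th coordinate along the outgoing rays.
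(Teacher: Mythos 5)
Your proposal is correct and follows essentially the same route as the paper: write $v=u-w$ as the solution of the exterior Dirichlet problem with data $(1-\psi)g$, invoke propagation of singularities for that problem (the paper cites Taylor's theorem where you cite Melrose--Sj\"ostrand), use the hyperbolicity of $\mathrm{WF}(g)$ from Proposition~\ref{T:Ellera}, and check that the outgoing rays from points with $y_d\le-\delta$ toward sources in $\Omega_0$ have strictly decreasing $d$-th coordinate. The only step you state rather than derive is that the outgoing spatial direction over a boundary covector $(\tau,\xi)$ is $(y_0-x_0)/|y_0-x_0|$ (the paper computes $\eta=\xi\mp\sqrt{\tau^2-|\xi|^2}\,\nu_y$ and uses convexity to fix the sign), but this is routine and the conclusion is the same.
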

\begin{proof}
Let $v$ be the solution of the exterior Cauchy-Dirichlet problem with zero
initial data and Dirichlet data $(1-\psi)g$, i.e. $v=u-w$. We will show that
\[
\mathrm{WF}(v)\cap T^{\ast}\{(t,x)\in Q^\mathrm{ext}\;:\; x_d\ge -\delta\}=\emptyset\;,
\]
i.e. the function $v$ is infinitely often differentiable in the upper half
space. According to Theorem 2.1 \cite{ta78}, the singularities of $v$ depend
on the wave front set of the Dirichlet data $(1-\psi)g$. Since $\psi\in
C^{\infty}$ we know that $\mathrm{WF}((1-\psi)g)\subset\mathrm{WF}(g)\cap T^{\ast}({\mathbb{R}}%
_{+}\times \mathbb{S}_{-}^{d-1})$, where $\mathbb{S}_{-}^{d-1}=\{\omega\in \mathbb{S}^{d-1}%
\;:\;\omega_{d}<-\delta\}$.

According to Proposition \ref{T:Ellera} the wave front
set of $g$ depends only on the wave front set of $f$. In particular all points
in $\mathrm{WF}(g)$ are hyperbolic points. Over each hyperbolic points there
are exactly four bicharacteristics, two incoming ones and an two outgoing ones. Since the initial data are zero, the solution is identically zero along all incoming bicharacteristics.

Let now $(t,y;\tau,\xi)\in\mathrm{WF}(g)$ and denote the exterior unit normal vector to $\Omega$ at $y$ by $\nu_y$. Then, according to Proposition
\ref{T:Ellera}, we have either $\tau>0$ (bicharacteristic $\gamma_{-}$) or
$\tau<0$ (bicharacteristic $\gamma_{+}$).  The spatial frequency of the outgoing bicharacteristic is $\eta = \xi + \sqrt{\tau^2-|\xi|^2}\nu_y$ in the case $\gamma_+$ and $\eta = \xi-\sqrt{\tau^2-|\xi|^2}\nu_y$ in the case of $\gamma_-$, compare with formula(\ref{E:bic}).

We will consider only the case $\tau>0$, the case $\tau<0$ is similar. Because of Proposition \ref{T:Ellera} there exists a point $x\in
\Omega_{0}$ and $\alpha>0$ such that
\[
t=|y-x|,\quad\tau=\alpha|y-x|,\quad\xi=-\alpha[y-x-((y-x)\cdot \nu_y)\nu_y]\;.
\]
Compute now
\[
\tau^{2}-|\xi|^{2}=\alpha^{2}[(y-x)\cdot \nu_y]^{2}
\]
and observe that the convexity of $\Omega$ implies that
\[
 \sqrt{\tau^{2}-|\xi|^{2}}=\alpha[(y-x)\cdot \nu_y]\;.
\]
The outgoing bicharacteristic $\gamma_-$ starting at the point $(t,y)$ must have the
direction
\[
(\tau,\sqrt{\tau^{2}-|\xi|^{2}}\nu_y-\xi)=\alpha(|y-x|,y-x)\;,
\]
compare with (\ref{E:bic}). Since $x\in\Omega_{0}$ and $y\in \mathbb{S}_{-}^{d-1}$, the projection of the bicharacteristic $\gamma_-$ into space-time is a
straight line which cannot meet $\{(t,x)\in Q^\mathrm{ext}\;:\;x_d\ge -\delta\}$ for $t>0$.

The second statement follows from the first one and Lemma \ref{L:trace}
\end{proof}

\section{The Radon transform and its wave front set}\label{S:Radon}

We have defined the Radon transform earlier, by equation (\ref{E:Radontr}).

\begin{prop}
\label{T:Ellerc} The Radon transform can be extended to a bounded linear
operator from $\mathcal{E}^{\prime}({\mathbb{R}}^{d})$ to $\mathcal{D}%
^{\prime}(\mathbb{S}^{d-1}\times{\mathbb{R}})$ and
\begin{equation}%
\begin{split}
\mathrm{WF}(\mathcal{R}f)\subset\{  &  (\pm\omega,\pm\omega\cdot x;\mp
\alpha\lbrack x-(x\cdot\omega)\omega],\pm\alpha)\in T^{\ast}(\mathbb{S}^{d-1}%
\times{\mathbb{R}})\setminus0\\
&  \;:\omega\in \mathbb{S}^{d-1},\alpha>0\mbox{ and }(x,\omega)\in\mathrm{WF}(f)\}\;.
\end{split}
\label{E:Eller7}%
\end{equation}

\end{prop}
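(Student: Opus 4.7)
The plan is to prove Proposition~\ref{T:Ellerc} in two steps: first extend $\mathcal{R}$ from test functions to compactly supported distributions, then compute the upper bound for $\mathrm{WF}(\mathcal{R}f)$.

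For the extension, any $f\in\mathcal{E}'(\mathbb{R}^d)$ lies in $H^s(\mathbb{R}^d)$ for some (possibly very negative) $s\in\mathbb{R}$. The Sobolev bound already recalled in Section~1,
\[
\|\mathcal{R}f\|_{H^{s+(d-1)/2}(Z)}\leq C(s,d)\,\|f\|_{H^{s}(\mathbb{R}^d)},
\]
together with the density of $C_{c}^{\infty}(\mathbb{R}^d)$ in $H^{s}_{\mathrm{comp}}(\mathbb{R}^d)$, extends $\mathcal{R}$ by continuity to a bounded linear operator from $\mathcal{E}'(\mathbb{R}^d)$ to $\mathcal{D}'(Z)$.

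For the wave front inclusion, I would identify the Schwartz kernel of $\mathcal{R}$ as
\[
K(\omega,p,x)=\delta(x\cdot\omega-p),
\]
which is a conormal distribution with respect to the smooth hypersurface $\Sigma=\{(\omega,p,x)\in \mathbb{S}^{d-1}\times\mathbb{R}\times\mathbb{R}^d : x\cdot\omega=p\}$, so that $\mathrm{WF}(K)\subset N^{\ast}\Sigma\setminus 0$. The conormal bundle is parameterized by a nonzero multiple $\sigma$ of the differential of the defining function $x\cdot\omega-p$. The $\omega$-component, restricted to $T_{\omega}\mathbb{S}^{d-1}$, is the orthogonal projection $P_{\omega}x=x-(x\cdot\omega)\omega$; the $p$-component is $-\sigma$; the $x$-component is $\sigma\omega$. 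Hence
\[
N^{\ast}\Sigma\setminus 0=\{(\omega,p,x;\sigma P_{\omega}x,-\sigma,\sigma\omega):x\cdot\omega=p,\ \sigma\neq 0\}.
\]

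I would then invoke the standard composition theorem $\mathrm{WF}(\mathcal{R}f)\subset \mathrm{WF}'(K)\circ \mathrm{WF}(f)$, where $\mathrm{WF}'(K)$ is obtained from $\mathrm{WF}(K)$ by flipping the sign of the covector in the source variable $x$. A point $(\omega,p;\eta,\tau)\in\mathrm{WF}(\mathcal{R}f)$ therefore requires the existence of $(x,\xi)\in\mathrm{WF}(f)$ and $\sigma\neq 0$ with $\xi=-\sigma\omega$, $\eta=\sigma P_{\omega}x$, $\tau=-\sigma$, and $p=x\cdot\omega$. The constraint that $\xi$ is a positive multiple of the unit vector $\omega$ admits exactly two solutions on the sphere: the Radon direction equals $+\omega$ with $\sigma<0$, or $-\omega$ with $\sigma>0$. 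Setting $\alpha=|\sigma|>0$ and using $P_{-\omega}=P_{\omega}$, these two branches yield precisely the two signs $(\pm\omega,\pm\omega\cdot x;\mp\alpha[x-(x\cdot\omega)\omega],\pm\alpha)$ asserted in~(\ref{E:Eller7}).

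The main obstacle is careful bookkeeping of sign conventions and of the cotangent geometry of the sphere: translating between $\mathrm{WF}(K)$ and $\mathrm{WF}'(K)$, correctly handling both branches of the sphere constraint on $\omega$, and restricting the full $\mathbb{R}^d$-differential $\sigma x$ in the $\omega$-slot to $T_{\omega}^{\ast}\mathbb{S}^{d-1}$ via $P_{\omega}$. Once these conventions are pinned down, the verification amounts to a direct substitution into the conormal formula for $N^{\ast}\Sigma$.
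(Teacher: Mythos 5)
Your proposal is correct and follows essentially the same route as the paper: identify the Schwartz kernel $\delta(x\cdot\omega-p)$ as a conormal distribution along the incidence hypersurface, compute its wave front set, and apply H\"ormander's composition theorem $\mathrm{WF}(\mathcal{R}f)\subset\mathrm{WF}'(K)\circ\mathrm{WF}(f)$, with the same sign bookkeeping for the two branches $\pm\omega$. The only cosmetic differences are that the paper works in local coordinates on $\mathbb{S}^{d-1}$ and obtains $\mathrm{WF}(K)$ via the pullback theorem, and justifies the extension to $\mathcal{E}'$ by checking $\mathrm{WF}'(K)_X=\emptyset$ rather than by the Sobolev continuity estimate; your parameterization with $\sigma\neq0$ makes the hypotheses of the composition theorem (which you should state you are verifying) hold trivially.
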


For $d=2$ this is Theorem 3.1 in \cite{qu93}.

\begin{proof}
A large part of the proof is an application of Theorem 8.2.13 in
\cite{hormander83a}. We use local coordinates $\omega=\varphi(y^{\prime})$,
$y^{\prime}\in Y^{\prime}\subset{\mathbb{R}}^{d-1}$ where we demand that
$\varphi(Y^{\prime})$ is at most a hemisphere. The kernel of the Radon
transform is
\[
K(y,x)=\delta(\varphi(z)\cdot x-p)\;,
\]
where we abbreviate $y=(y^{\prime},p)\in{\mathbb{R}}^{d}$ and see that the kernel is a
composition of the delta function and the function $m(y,x)=\varphi(y^{\prime})\cdot
x-p$. Note that $K\in\mathcal{D}^{\prime}(Y\times X)$ where $Y=Y^{\prime
}\times{\mathbb{R}}$ and $X={\mathbb{R}}^{d}$. Recall that $\mathrm{WF}%
(\delta)=\{(0,\tau)\in T^{\ast}{\mathbb{R}}\setminus0\;:\;\tau\in{\mathbb{R}%
}\}$. The wave front set of this kernel is analyzed using Theorem 8.2.4 in
\cite{hormander83a}. We have
\[
m^{\prime}(y,x)=\left[
\begin{array}
[c]{c}%
\varphi^{\prime}(y^{\prime})^\top x\\
-1\\
\varphi(y^{\prime})
\end{array}
\right]
\]
and hence,
\begin{equation}
\mathrm{WF}(K)\subset\left\{  \left(  \left[
\begin{array}
[c]{c}%
y^{\prime}\\
\varphi(y^{\prime})\cdot x
\end{array}
\right]  ,x;\left[
\begin{array}
[c]{c}%
\tau\varphi^{\prime}(y^{\prime})^{\top}x\\
-\tau
\end{array}
\right]  ,\tau\varphi(y^{\prime})\right)  \in T^{\ast}(Y\times X)\setminus
0\;:\;\tau\in{\mathbb{R}},y^{\prime}\in Y^{\prime},x\in{\mathbb{R}}%
^{d}\right\}  \;. \label{E:Eller6}%
\end{equation}

Here $\varphi^{\prime}$ is the Jacobian matrix of $\varphi$. Now Theorem
8.2.13 in \cite{hormander83a} is used to finish the proof. At first note that
\[
\mathrm{WF}^{\prime}(K)_{X}=\{(x,\xi)\in T^{\ast}X\setminus0\;:\;(y\cdot
x,x;0,-\xi)\in\mathrm{WF}(K)\mbox{ for some }y\in Y\}=\emptyset\;,
\]
since the first fiber component in (\ref{E:Eller6}) cannot be zero. Hence, the
Radon transform can be continuously extended to a linear operator from
$\mathcal{E}^{\prime}(X)$ into $\mathcal{D}^{\prime}(Y)$. Similarly, we have
\[
\mathrm{WF}(K)_{Y}=\{(y,\eta)\in T^{\ast}X\setminus0\;:\;(y,x;\eta
,0)\in\mathrm{WF}(K)\mbox{ for some }y\in Y\}=\emptyset\;,
\]
and hence,
\begin{equation}%
\begin{split}
&  \mathrm{WF}(\mathcal{R}(f))\subset\mathrm{WF}^{\prime}(K)\circ
\mathrm{WF}(f)=\{(y,\eta)\;:\;(y,x;\eta,-\xi)\in\mathrm{WF}%
(K)\mbox{ for some }(x,\xi)\in\mathrm{WF}(f)\}\\
&  =\left\{  \left(  \left[
\begin{array}
[c]{c}%
y^{\prime}\\
\varphi(y^{\prime})\cdot x
\end{array}
\right]  ,\left[
\begin{array}
[c]{c}%
-\tau\varphi^{\prime}(y^{\prime})^{\top}x\\
\tau
\end{array}
\right]  \right)  \in T^{\ast}(Y)\setminus0\;:\;\tau\in{\mathbb{R}},y^{\prime
}\in Y^{\prime},(x,\varphi(y^{\prime}))\in\mathrm{WF}(f)\right\}  \;.
\end{split}
\label{E:Eller10}%
\end{equation}
Finally we return from the local coordinate system in $Y$ to $(\omega,p)\in
\mathbb{S}^{d-1}\times{\mathbb{R}}$. Here it is important to note that the co-vector
$\varphi^{\prime}(y^{\prime})^{\top}x\in T_{y^{\prime}}^{\ast}Y^{\prime}$ gets
mapped to $x-(\omega\cdot x)\omega\in T_{\omega}^{\ast}\mathbb{S}^{d-1}$. Furthermore,
if $(x,\omega)\in\mathrm{WF}(f)$, then $(x,\tau\omega)\in\mathrm{WF}(f)$ for
$\tau>0$ and $(x,-\tau\omega)\in\mathrm{WF}(f)$ for all $\tau<0$.
\end{proof}

Actually, a more precise result holds true, there is equality in
(\ref{E:Eller7}). This can be inferred from the theory of Fourier integral
operators and is explained in \cite[p. 59]{bal19}.

The Radon transforms of the solutions $u(t,x)$ to the Cauchy problem
(\ref{E:originalwave})-(\ref{E:originalBC}) and $w(t,x)$ to the
Cauchy-Dirichlet problem (\ref{E:Eller2}) carry $t$ as a parameter. Recall
$\mathbb{S}_{+}^{d-1}=\{\omega\in \mathbb{S}^{d-1}\;:\;\omega_{d}>-\delta/2\}$.

\begin{theorem}\label{T:main}
Let $f\in H^s_0(\Omega_0)$ and suppose that $u$ is the solution to the Cauchy problem (\ref{E:originalwave})-(\ref{E:originalBC}). Let $w$ be the solution to the exterior initial-boundary value problem (\ref{E:Eller2}). Then
\[
\mathrm{WF}(\left[  \mathcal{R}(u - w)\right]  (2,\cdot)) \cap T^{\ast}(\mathbb{S}_{+}
^{d-1}\times(1,3))= \emptyset \;.
\]
\end{theorem}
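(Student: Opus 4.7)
The plan is to pull back a putative wave front of $\mathcal{R}[(u-w)(2,\cdot)]$ to a wave front of $(u-w)(2,\cdot)$ itself via Proposition \ref{T:Ellerc}, and then use the bicharacteristic geometry from the proof of Theorem \ref{T:Ellerb} to show that no such wave front point is compatible with $(\omega,p)\in\mathbb{S}_{+}^{d-1}\times(1,3)$. Write $v:=u-w$; by finite propagation speed, $v(2,\cdot)$ is compactly supported, and by Theorem \ref{T:Ellerb} its wave front set lies over $\{z_{d}<-\delta\}$.

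Assume for contradiction that $(\omega,p)\in\mathrm{WF}(\mathcal{R}[v(2,\cdot)])$ with $\omega\in\mathbb{S}_{+}^{d-1}$ and $p\in(1,3)$. Proposition \ref{T:Ellerc} then produces a point $(z,\omega_{1})\in\mathrm{WF}(v(2,\cdot))$ with $\omega_{1}$ parallel to $\omega$ (up to sign) and $\omega\cdot z=p$. To characterize such wave-front points I would revisit the bicharacteristic computation inside the proof of Theorem \ref{T:Ellerb}: since $(1-\psi)g$ is smooth outside $\{y_{d}<-\delta\}$, the boundary data feeding singularities of $v$ lives in $\mathrm{WF}(g)\cap T^{\ast}(\mathbb{R}_{+}\times\{y\in\partial\Omega:y_{d}<-\delta\})$, and by Proposition \ref{T:Ellera} each such boundary wave-front point corresponds to some source $x_{0}\in\Omega_{0}$ and some exit point $y\in\partial\Omega$ with $y_{d}<-\delta$. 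The outgoing null bicharacteristic travels with unit speed in the spatial direction $\omega_{0}:=(y-x_{0})/|y-x_{0}|$, reaching $z=x_{0}+2\omega_{0}$ at $t=2$, and Lemma \ref{L:trace} restricted to $\{t=2\}$ forces $\omega_{1}$ to be a nonzero real multiple of $\omega_{0}$.

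The remaining steps are geometric. Because $(x_{0})_{d}\geq0$, $y_{d}\leq-\delta$, and $|y-x_{0}|\leq|y|+|x_{0}|\leq2$,
\[
(\omega_{0})_{d}=\frac{y_{d}-(x_{0})_{d}}{|y-x_{0}|}\leq-\frac{\delta}{2}.
\]
Since $\omega\in\mathbb{S}_{+}^{d-1}$ satisfies $\omega_{d}>-\delta/2$ whereas $(\omega_{0})_{d}\leq-\delta/2$, the case $\omega=\omega_{0}$ is excluded and we must have $\omega=-\omega_{0}$. Then $z=x_{0}+2\omega_{0}=x_{0}-2\omega$, so
\[
p=\omega\cdot z=\omega\cdot x_{0}-2\leq|x_{0}|-2\leq-1,
\]
using $\Omega_{0}\subset B(0,1)$. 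This contradicts $p>1$.

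The principal technical obstacle is the second step: the statement of Theorem \ref{T:Ellerb} records only that base points of $\mathrm{WF}(v(2,\cdot))$ have $z_{d}<-\delta$, whereas the argument above needs the finer structural fact that each such base point equals $x_{0}+2\omega_{0}$ for an admissible source/exit pair, with fiber parallel to $\omega_{0}$. Extracting this from the bicharacteristic analysis inside the proof of Theorem \ref{T:Ellerb}, and handling both branches $\tau>0$ and $\tau<0$ (together with the symmetry $\eta\to-\eta$ inherent in wave fronts of real distributions), is the most delicate part of the argument; once in place, the geometric contradiction is immediate.
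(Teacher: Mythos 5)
Your proof is correct and follows essentially the same route as the paper: pull back a putative singularity of $\mathcal{R}[v(2,\cdot)]$ over $(\omega,p)$ via Proposition \ref{T:Ellerc}, localize $\mathrm{WF}(v(2,\cdot))$ to $\{x_d<-\delta\}$ via Theorem \ref{T:Ellerb}, and then use the explicit bicharacteristic geometry to show that any remaining singular point with fiber $\pm\omega$, $\omega\in\mathbb{S}_{+}^{d-1}$, has $\omega\cdot z\le -1$, incompatible with $p\in(1,3)$. The only real difference is the middle step: where you re-trace the outgoing bicharacteristics of the exterior problem driven by $(1-\psi)g$ to obtain $z=x_{0}+2\omega_{0}$, the paper instead notes that over $D_{-}$ one has $\mathrm{WF}(v(2,\cdot))\subset\mathrm{WF}(u(2,\cdot))$ and reads off $z=y\pm2\omega$ directly from the free-space formula (\ref{E:Eller8}), excluding the $z=y+2\omega$ branch via $z_{d}<-\delta$ rather than via your bound $(\omega_{0})_{d}\le-\delta/2$ (both work) --- so the ``finer structural fact'' you flag as the delicate point is obtained there essentially for free, though that inclusion too is justified only by the bicharacteristic bookkeeping inside the proof of Theorem \ref{T:Ellerb}, not by its statement.
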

\begin{proof}
As before, let $v=u-w$. We have to show that $\mathrm{WF}(\left[
\mathcal{R}v\right]  (2,\cdot))\cap T^{\ast}(\mathbb{S}_{+}^{d-1}\times(1,3))=\emptyset$. Let $\omega\in \mathbb{S}_{+}^{d-1}$. Proposition \ref{T:Ellerc} can be
rephrased as follows: The wave front set of the Radon transform of
$v(2,\cdot)$ can have points with base point $(\omega,p)$ only if the wave
front set of $v(2,\cdot)$ has points of the form $(x,\pm\omega)$ for some
$x\in{\mathbb{R}}^{d}$ satisfying $\omega\cdot x=p$. According to Theorem
\ref{T:Ellerb}, the wave front sets of $v(2,\cdot)$ is empty over the set $D_+$
whereas over $D_-:=\{ x\in \mathbb{R}^d\setminus \overline{\Omega}\;:\; x_d<-\delta\}$ the wave front set of $v(2,\cdot)$ is a subset of the wave front
set of $u(2,\cdot)$.

However, we can show that the wave front set of $u$ over $D_{-}$ has no points of the form
$(x,\pm\omega)$ with $x$ satisfying $\omega\cdot x=p$ for some $p>-1$. The wave
front set of $u$ is given in (\ref{E:Eller8}). If $(x,\pm\omega)\in
\mathrm{WF}\left(  u(2,\cdot)\right)  $, then $(y,\pm\omega)\in\mathrm{WF}(f)$
where $x=y\pm 2\omega$. Note that $x=y+2\omega$ is not possible since
$\mathrm{supp}\,f=\Omega_{0}$ implies $y_d\ge 0$ and thus $x_d>-\delta$. Hence, $x\cdot\omega=y\cdot\omega-2<-1$.
\end{proof}
Recall the definitions of $\tilde{F}$ and $G$ given in (\ref{E:Ftilde}) and (\ref{E:G}). We will be interested in the error
\[
 E_G(\omega,p) = F(\omega,p) - G(\omega,p)\;.
\]
For $\omega\in \mathbb{S}_{+}^{d-1}$ and $p \in (-1,1)$ we have just proved that $F(\omega,p)-\tilde{F}(\omega,p)$ is an infinitely smooth function. Furthermore, by symmetry, for $\omega_d <\delta/2$ and $p\in (-1,1)$, we infer that $F(\omega,p) - \tilde{F}(-\omega,-p)$ is smooth.

Let $H$ denote the Heaviside function of a one-dimensional variable
\[
H(s)\equiv\left\{
\begin{array}
[c]{cc}%
1, & s\geq0,\\
0, & s<0.
\end{array}
\right.
\]
Then,
\begin{equation}\label{E:Eller9a}
 E_G(\omega,p) = F(\omega) - H(\omega_d)\tilde{F}(\omega,p) -[1-H(\omega_d)] \tilde{F}(-\omega,-p)
\end{equation}
and this function defined on
$\mathbb{S}^{d-1}\times (-1,1)$ is an infinitely smooth function except for a
discontinuity on the set $\mathbb{E}\times (-1,1)$ where $\mathbb{E}$ is the equator $\mathbb{E}\equiv\{ \omega  \;:\;\omega\in \mathbb{S}^{d-1},\omega_d=0\}$. The next result shows that these singularities disappear, once the inverse Radon transform is applied. In order to apply the inverse Radon transform we need to extend $E_G$ to all $p\in \mathbb{R}$. This is done by setting $G\equiv 0$ for all $|p|>1$. Abusing notation, we will denote the resulting extension by $E_G$ as well.

\begin{prop}\label{T:final}
The inverse Radon transform $\mathcal{R}^{-1}E_G$ restricted to the open unit ball is a smooth function $\kappa\in C^{\infty}(B(0,1))$.
\end{prop}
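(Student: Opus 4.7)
The plan is to show $\kappa=\mathcal{R}^{-1}E_G$ is smooth on $B(0,1)$ by a microlocal argument: I compute the wave front set of $E_G$, verify that every covector there either has zero dual variable in $p$ or sits over $|p|=1$, and invoke the canonical relation of $\mathcal{R}^{-1}$ to conclude that no singularity can propagate into the open unit ball.

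First, using the Radon symmetry $F(\omega,p)=F(-\omega,-p)$, rewrite (\ref{E:Eller9a}) as
\[
E_G = H(\omega_d)\bigl[F(\omega,p)-\tilde F(\omega,p)\bigr] + H(-\omega_d)\bigl[F(\omega,p)-\tilde F(-\omega,-p)\bigr].
\]
By Theorem~\ref{T:main} the first bracket is $C^{\infty}$ on $\mathbb{S}_{+}^{d-1}\times(-1,1)$ and, by symmetry, the second is $C^{\infty}$ on $\{\omega_d<\delta/2\}\times(-1,1)$. Across the equator $\mathbb{E}=\{\omega_d=0\}$, $E_G$ is therefore a Heaviside-type conormal distribution with smooth symbols on either side. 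Over $\mathbb{S}^{d-1}\times(-1,1)$ its wave front set is contained in the conormal bundle $N^{*}(\mathbb{E}\times(-1,1))$; covectors there have the form $(\omega_0,p_0;\beta e_d,0)$ with $\omega_0\in\mathbb{E}$ and $\beta\neq 0$, so in particular the dual coordinate to $p$ is $\alpha=0$. The extension of $G$ by zero past $|p|=1$ may create additional conormal singularities on $\mathbb{S}^{d-1}\times\{\pm 1\}$, with covectors of the form $(0,\gamma)$, i.e.\ $\eta=0$ and $\alpha\neq 0$.

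Second, I push these singularities through the factorisation $\mathcal{R}^{-1}=c_d\,\mathcal{R}^{\#}\circ\mathcal{T}\circ\partial_p^{\,d-1}$, with $\mathcal{T}$ equal to $\mathcal{H}$ or $\mathrm{id}$ according to the parity of $d$. Both $\partial_p^{\,d-1}$ and $\mathcal{H}$ are pseudodifferential in $p$, so they do not enlarge the wave front set. The back-projection $\mathcal{R}^{\#}$ is a Fourier integral operator whose canonical relation is the transpose of the one underlying Proposition~\ref{T:Ellerc}, namely
\[
\bigl\{\bigl((x,\alpha\omega),(\omega,\omega\cdot x;-\alpha[x-(x\cdot\omega)\omega],\alpha)\bigr)\,:\,\alpha\neq 0\bigr\}.
\]
Consequently, a point $(x_0,\xi_0)\in\mathrm{WF}(\mathcal{R}^{-1}E_G)$ with $\xi_0\neq 0$ forces the existence of $(\omega,p;\eta,\alpha)\in\mathrm{WF}(E_G)$ with $\alpha=\pm|\xi_0|$, $\omega=\pm\xi_0/|\xi_0|$, $p=\omega\cdot x_0$ and $\eta=-\alpha[x_0-(x_0\cdot\omega)\omega]$; in particular $\alpha\neq 0$.

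Third, if $x_0\in B(0,1)$ then $|x_0|<1$, so $p=\omega\cdot x_0$ lies strictly in $(-1,1)$; but by the first step every WF-point of $E_G$ over that range has $\alpha=0$. The only WF-points with $\alpha\neq 0$ sit over $|p|=1$, where additionally $\eta=0$, and this last condition forces $x_0=p\omega=\pm\omega$, giving $|x_0|=1$. Either alternative contradicts $x_0\in B(0,1)$, so $\mathrm{WF}(\mathcal{R}^{-1}E_G)\cap T^{*}B(0,1)=\emptyset$ and $\kappa\in C^{\infty}(B(0,1))$. The principal subtlety I expect is the analysis at the ``corner'' points $(\omega_0,\pm 1)$ with $\omega_0\in\mathbb{E}$, where the equator jump and the boundary jump superpose; the resulting WF-points, however, still inherit either $\alpha=0$ (and contribute nothing over the open ball) or $\eta=0$ with $|p|=1$ (and contribute only on $\partial B(0,1)$), so the conclusion is preserved.
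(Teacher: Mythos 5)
Your argument is correct and follows essentially the same route as the paper: identify $\mathrm{WF}(E_G)$ as conormal to the equator (with vanishing dual $p$-variable) over $p\in(-1,1)$ and conormal to $\{|p|=1\}$ elsewhere, then use the canonical relation of $\mathcal{R}^{-1}$ (the transpose of the one in Proposition~\ref{T:Ellerc}) to see that neither family of covectors can produce singularities over the open unit ball. The only quibble is at the corner points $\mathbb{E}\times\{\pm1\}$, where the wave front set of a product of transverse jumps need not lie in the union of the two conormal bundles as you assert; but any mixed covector $(\beta e_d,\gamma)$ with $\beta,\gamma\neq0$ there still maps under the canonical relation to a point with $|x|>1$, so the conclusion stands (the paper's own proof does not address this point either).
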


\begin{proof}
 The function $E_G$ has two locations of singularities: One is on $\mathbb{E}\times (-1,1)$, due to (\ref{E:Eller9a}) and the other is at $|p|=1$ due to the extension by zero outside of $p\in [-1,1]$.

We discuss at first $\mathrm{WF}(E_G) \cap T^*(\mathbb{S}^{d-1}\times (-1,1))$.  According to formula (\ref{E:Eller9a}), we have
\[
 \begin{split}
\mathrm{WF}(E_G)\cap T^*(\mathbb{S}^{d-1}\times (-1,1))\subset &\left\{ \left(
 \left[ \begin{matrix} \omega \\ p \end{matrix} \right],
 \left[ \begin{matrix} \xi \\  0 \end{matrix} \right] \right)
   \in T^{\ast}(\mathbb{S}^{d-1}\times (-1,1))\setminus
0\;: \right.  \\& \left. \omega_{d}=0, \xi \perp \dot{c}(0), c(0)=\omega, \mbox{ for all } c(t) \in \mathbb{E} \right\}  \;.
 \end{split}
\]
Here $c:(-\alpha,\alpha)\to \mathbb{E}$ denotes regular curves in $\mathbb{E}$.
In view of (\ref{E:Eller7}) we see that this set has an empty intersection with the wave front set of the Radon transform of any distribution.

From the theory of elliptic Fourier integral operators
\cite[Theorem 4.2.5]{duistermaat96} one knows that the inverse transform
$\mathcal{R}^{-1}:\mathcal{E}^{\prime}({S}^{d-1}\times\mathbb{R})\rightarrow
\mathcal{D}^{\prime}(\mathbb{R}^{d})$ has the same canonical relation as does
$\mathcal{R}$, only with the variables switched around. The application of the inverse Radon transform will ignore those points in the
wave front set of $E_G$.

In view of (\ref{E:Eller7}), the singularities of $E_G$ at $|p|=1$ will correspond to singularities of its inverse Radon transform outside of the unit ball.

But this means that $\mathrm{WF}(\mathcal{R}^{-1}[E_G])\cap T^*B(0,1)=\emptyset$.
\end{proof}

\section{A fast algorithm for the reduced circular geometry in 2D}

\label{S:algorithm}

The three-step image reconstruction procedure proposed in this paper is
detailed at the end of section~\ref{S:reduced-reco}. Practicality of this
technique depends, in particular, on the computational complexity of steps 1
and 2. (Efficient algorithms for implementing step 3 are well known). Let us
assume that the domain $\Omega$ is discretized using $m^{d}$ nodes, and the
number of time steps is $\mathcal{O}(m)$. The optimal number of projections is
then $\mathcal{O}(m^{d-1})$ with the number of samples in a projection equal
to $\mathcal{O}(m).$ If the first step is implemented using finite
differences, it will have computational complexity $\mathcal{O}(m^{d+1})$  floating point operations (flops). A
straightforward computation of the Radon projections requires $\mathcal{O}%
(m^{d+2})$ flops.

However, for special geometries it might be possible to design significantly
faster algorithms. In the present section we develop an asymptotically fast
($\mathcal{O}(m^{2}\log m)$ flops) algorithm applicable in 2D when the
acquisition curve is a circular arc.

To be precise, we consider the following geometry. The region $\Omega$ is the
unit disk $B(0,1)$ centered at the origin, $\Omega_{0}$ is the upper half of
$B(0,1),$ i.e.%
\[
\Omega_{0}\equiv\{x:|x|<1\text{ and }x_{d}>0\}.
\]
The data acquisition surface $\Gamma$ is an arc of a unit circle%
\[
\Gamma\equiv\{z:|z|=1\text{ and }z_{d}>-2\delta.\}
\]
The geometry is shown in Figure~\ref{F:circgeom}, together with the phantom we
use in the numerical simulations described further below (for which we chose
$2\delta$ to be equal to $\arctan(10^{\circ})$).
\begin{figure}[h]
\begin{center}
\includegraphics[scale = 0.9]{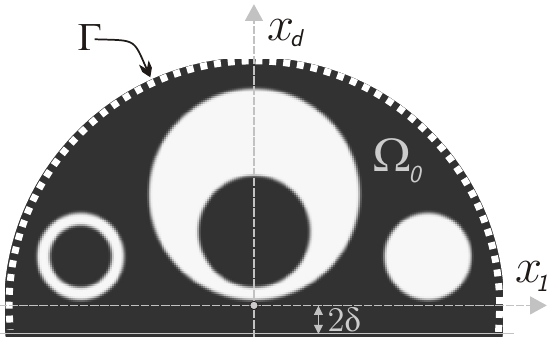}
\end{center}
\caption{Reduced circular data acquisition geometry}%
\label{F:circgeom}%
\end{figure}

Due to the circular symmetry of $\Omega$, solving the exterior boundary value
problem~(\ref{E:Eller2}) commutes with rotation. In addition, it commutes with
the shifts in time variable. It is well known that the Radon transform
commutes with rotation, too. Thus, one can expect that the operator that maps
boundary data $h(\omega,t)$ into the Radon projections is a convolution, and
that the Fourier transform in $t$ combined with Fourier series in the angular
variable would diagonalize it. This is indeed the case as shown below.

We start by noticing that the wave equation separates in the polar coordinate
system. In particular, the following combinations $W_{k}(t,\lambda,r,\theta)$
of functions represent radiating solutions of the wave equation (propagating
away from the origin):
\[
W_{k}(t,\lambda,r,\theta)\equiv\frac{H_{|k|}^{(1)}(\lambda r)\exp
(ik\theta)\exp(-i\lambda t)}{H_{|k|}^{(1)}(\lambda)},\quad\lambda\in
\mathbb{R}\backslash0,\quad k\in\mathbb{Z}.
\]
where $H_{k}^{(1)}(s)$ is the Hankel function of the first kind of order $k$,
$\lambda$ is any non-zero frequency, and $r$ and $t$ are the polar coordinates
corresponding to the variable $x,$ i.e. $x=r(\cos\theta,\sin\theta)$. Let us
extend $W_{k}(t,\lambda,r,\theta)$ to $\lambda=0$ by continuity (see
\cite{NIST}, section 10.7)%
\[
W_{k}(t,0,r,\theta)\equiv\frac{1}{r^{|k|}}\exp(ik\theta),\quad k\in
\mathbb{Z}.
\]
At $\lambda=0$ functions $W_{k}(t,0,r,\theta)$ are stationary solutions of the
wave equations (i.e., harmonic functions), bounded at infinity. The values of
$W_{k}$'s on the cylinder $r=1,$ $\theta\in\lbrack0,2\pi],$ $t\in\mathbb{R}$
are functions $\exp(ik\theta)\exp(-i\lambda t)$; they represent a Fourier basis.

We will use functions $W_{k}(t,\lambda,r,\theta)$ to solve the forward problem
(\ref{E:Eller2}). Let us extend the boundary condition $h(t,x(1,\theta))$ in
the time variable by 0 to all $t<0.$ Since solutions of the wave equation are
causal, and we are only interested in $w(t,x)$ at $t=T,$ we can smoothly
cut-off $h(t,x(1,\theta))$ to $0$ on the interval $[T,T+b],$ and define
$h(t,x(1,\theta)$ to be 0 for all $t>T+b.$ There is enough freedom in defining
this cut-off to enforce the condition%
\[
\int\limits_{0}^{2\pi}\int\limits_{\mathbb{R}}h(t,x(1,\theta))dtd\theta=0.
\]
Now, solution of problem (\ref{E:Eller2}) can be formally represented as
\begin{equation}
w(t,x)\equiv w^{\ast}(t,r,\theta)=\int\limits_{\mathbb{R}}\left[
\sum_{k=-\infty}^{\infty}b_{k}(\lambda)W_{k}(t,\lambda,r,\theta)\right]
d\lambda. \label{E:separation}%
\end{equation}
where coefficients $a_{k}(\lambda)$ are to be determined from the boundary
condition%
\begin{equation}
w^{\ast}(t,1,\theta)=h(t,x(1,\theta)), \label{E:circleBC}%
\end{equation}
on $\mathbb{R}\times\mathbb{S}^{1},$ which yields%
\begin{equation}
b_{k}(\lambda)=\frac{1}{(2\pi)^{2}}\int\limits_{0}^{2\pi}\left[
\int\limits_{\mathbb{R}}h(t,x(1,\theta))\exp(i\lambda t)dt\right]
\exp(-ik\theta)d\theta. \label{E:Fourier-coefs}%
\end{equation}
We notice, for future use, that each $b_{k}(\lambda)$ is continuous (in fact,
real-analytic) due to the bounded support of $h$ in time variable.

From the numerical standpoint it is somewhat easier to find instead of the
Radon projections their derivative in $p$. To that end we will use
$\frac{\partial}{\partial t}w^{\ast}(t,r,\theta)$; we only need its value at
$t=T$:%
\begin{align}
\frac{\partial w}{\partial t}(T,x)  &  =\frac{\partial w^{\ast}}{\partial
t}(T,r,\theta)=-i\int\limits_{\mathbb{R}}\left[  \sum_{k=-\infty}^{\infty
}b_{k}(\lambda)\lambda W_{k}(T,\lambda,r,\theta)\right]  d\lambda.\nonumber\\
&  =-i\int\limits_{\mathbb{R}\backslash0}\left[  \sum_{k=-\infty}^{\infty
}b_{k}(\lambda)\lambda\frac{H_{|k|}^{(1)}(\lambda r)}{H_{|k|}^{(1)}(\lambda
)}\exp(ik\theta)\exp(-i\lambda T)\right]  d\lambda. \label{E:dwdt}%
\end{align}
In the above equation we are able to exclude $\lambda=0$ from the integration
domain since the integrand is zero at this point.

Our goal is to approximate $\frac{\partial}{\partial p}F(\omega,p)$ by the
Radon transform of $\frac{\partial w}{\partial t}$:%
\[
\frac{\partial}{\partial p}F(\omega,p)\thickapprox-2\left[  \mathcal{R}%
\frac{\partial w}{\partial t}\right]  (2,\omega,p+2)
\]
However, computing $\frac{\partial w}{\partial t}(T,x)$ by using equation
(\ref{E:dwdt}) and by numerically evaluating its Radon transform would be
relatively slow. Instead, we take advantage of the fact that the line
integrals of the circular wave functions $H_{|k|}^{(1)}(\lambda_{l}%
r)\exp(ik\theta)$ can be found analytically as closed form expressions.

Define $I_{k}(\omega(\alpha),p,\lambda),$ with $\omega(\alpha)=(\cos
\alpha,\sin\alpha)$ as the value of the Radon transform of the function
$\frac{H_{|k|}^{(1)}(\lambda r(x))}{H_{|k|}^{(1)}(\lambda)}\exp(ik\theta(x)$
at $(\omega(\alpha),p)$:
\begin{align*}
I_{k}(\omega(\alpha),p,\lambda) &  =\mathcal{R}\left[  \frac{H_{|k|}%
^{(1)}(\lambda r(x))}{H_{|k|}^{(1)}(\lambda)}\exp(ik\theta(x)\right]
(\omega(\alpha),p)\\
&  =\frac{1}{H_{|k|}^{(1)}(\lambda)}\int\limits_{\mathbb{R}}H_{|k|}%
^{(1)}(\lambda r(x))\exp(ik\theta(x))\delta(x\cdot\omega(\alpha)-p)dx.
\end{align*}
Compute first
\[
H_{|k|}^{(1)}(\lambda)I_{|k|}(\omega(0),p,\lambda)=2\int\limits_{0}^{\infty
}H_{|k|}^{(1)}\left(  \lambda\sqrt{p^{2}+x_{2}^{2}}\right)  \cos\left(
k\arcsin\left(  \frac{x_{2}}{r}\right)  \right)  dx_{2}.
\]
Using formulas 6.941.3 and 6.941.4 in \cite{Gradshteyn}, one obtains, for
$\lambda>0,$
\[
I_{k}(\omega(0),p,\lambda)=2\frac{(-i)^{|k|}}{\lambda H_{|k|}^{(1)}(\lambda
)}\exp(i\lambda p).
\]
Then, for general $\alpha,$
\begin{equation}
I_{k}(\omega(\alpha),p,\lambda)=2\frac{(-i)^{|k|}}{\lambda H_{|k|}%
^{(1)}(\lambda)}\exp(i\lambda p)\exp(ik\alpha).\label{E:fancy-int}%
\end{equation}
By utilizing the well-known formulas for Hankel functions of a negative
argument (see \cite{NIST}, section 10.11.7)%
\[
H_{|k|}^{(1)}(-s)=(-1)^{|k|-1}H_{|k|}^{(2)}(s)=(-1)^{|k|-1}\overline
{H_{|k|}^{(1)}(s)},
\]
one verifies that (\ref{E:fancy-int}) is valid for all $\lambda\neq0.$

In order to turn the above relations into an algorithm, let us assume that $M$
point-like transducers are placed at equispaced point $\theta_{j}%
=j\Delta\theta,$ $j=0,1,2,..M,$ and the measurements are done on an equispaced
time grid $t_{i}=i\Delta t,$ $i=1,2,...,N.$ We approximate $b_{k}(\lambda)$ on
a uniform grid of frequencies $\lambda_{l}=l\Delta\lambda,$ $l=-l_{Nyq}%
,...,l_{Nyq}$, where $l_{Nyq}\Delta\lambda$ equals to the Nyquist frequency of
the discretization in time. This is can be done by discretizing
(\ref{E:Fourier-coefs}) using the trapezoid rule, and applying the Fast
Fourier Transform (FFT)\ algorithm both in $\theta$ and in $t.$\ Such an
approximation of the outer integral of a periodic function in
(\ref{E:Fourier-coefs}) is quite standard. However, the use of the FFT in $t$
is equivalent to making the boundary condition periodic in time. In order to
reduce\ the error introduced by this effect, the computation of the inner
integral should be done with enough zero-padding. On the other hand, computing
$b_{k}$'s using the 2D FFT makes this step asymptotically (and practically)
very fast.

Once coefficients $b_{k}(\lambda_{l})$ are obtained, one can approximate the
time derivative of $w^{\ast}(t,r,\theta)$ at $t=T$:%
\begin{equation}
\frac{\partial w^{\ast}}{\partial t}(T,r,\theta)\approx-\sum
_{\substack{l=-l_{Nyq},\\l\neq0}}^{l_{Nyq}}\sum_{k=-M/2}^{M/2-1}i\lambda
_{l}b_{k}(\lambda_{l})W_{k}(T,\lambda_{l},r,\theta). \label{E:approx_sol}%
\end{equation}
\bigskip

Combining this result with (\ref{E:approx_sol}) and substituting $T=2$ we
obtain:%
\begin{align}
\frac{\partial}{\partial p}\mathcal{R[}w(2,x)](\omega(\alpha),p+2)  &
\approx\sum_{k=-M/2}^{M/2-1}\left[  \sum_{\substack{l=-l_{Nyq},\\l\neq
0}}^{l_{Nyq}}\frac{2i(-i)^{|k|}b_{k}(\lambda_{l})}{H_{|k|}^{(1)}(\lambda_{l}%
)}\exp(i\lambda_{l}p)\right]  \exp(ik\alpha)\nonumber\\
&  =\sum_{k=-M/2}^{M/2-1}\left[  \sum_{l=-l_{Nyq},}^{l_{Nyq}}a_{k,l}%
\exp(i\lambda_{l}p)\right]  \exp(ik\alpha), \label{E:final-alg}%
\end{align}
with
\begin{equation}
a_{k,l}=\left\{
\begin{array}
[c]{cc}%
\frac{2i(-i)^{|k|}b_{k}(\lambda_{l})}{H_{|k|}^{(1)}(\lambda_{l})}, & l\neq0\\
0, & l=0
\end{array}
\right.  . \label{E:a-coefs}%
\end{equation}
One easily recognizes that formula (\ref{E:final-alg}) is a 2D discrete
Fourier transform, and thus it can be efficiently implemented using the 2D FFT. Our
algorithm then consists in

\begin{enumerate}
\item Computing coefficients $b_{k}(\lambda_{l})$ (equation
(\ref{E:Fourier-coefs}) using 2D FFT;

\item Computing coefficients $a_{k,l}$ using (\ref{E:a-coefs});

\item Computing $\frac{\partial}{\partial p}\mathcal{R[}w(2,x)](\omega
(\alpha),p+2)$ given by (\ref{E:final-alg})\ using 2D FFT;

\item Computing $\frac{\partial}{\partial p}F(\omega,p)\approx2\frac{\partial
}{\partial p}\mathcal{R[}w(2,x)](\omega(\alpha),p+2)$ for $\alpha\in
\lbrack0,\pi]$;

\item Computing $\frac{\partial}{\partial p}F(\omega,p)\approx-\frac{\partial
}{\partial p}F(-\omega,-p)$ for $\alpha\in(\pi,2\pi)$.
\end{enumerate}

An optional sixth step is to compute $F(\omega,p)$ from $\frac{\partial
}{\partial p}F(\omega,p)$ by numerical anti-differentiation, while enforcing
conditions $F(\omega,1)=F(\omega,-1)=0$ in the least square sense. This step
is optional since in the formula (\ref{E:Inverse_Radon}) (first raw)\ the
Hilbert transform commutes with differentiation, and therefore (\ref{E:Inverse_Radon})
can be easily modified to reconstruct $f(x)$ from $\frac{\partial}{\partial
p}F(\omega,p).$

If one assumes (realistically) that $M$, $N,$ and $l_{Nyq}$ are of order of
$\mathcal{O}(m),$ the algorithm requires $\mathcal{O}(m^{2}\log n)$ flops,
since the most time consuming operations here are the 2D FFT's.

The algorithm we presented shares important computational steps with the 2D
algorithm presented in \cite{Kun-cyl} and with the method applicable to the
reduced circular geometry in \cite{Kun-Do}. We would like to point out that
the technique of \cite{Kun-cyl} was developed to solve the inverse source
problem with full data; reduced data were not considered. The algorithm of
\cite{Kun-Do} yields a theoretically exact reconstruction from reduced data,
but at the expense of a significantly reduced support of source term $f(x)$ as
compared to the theoretically visible region. In the present method, the
theoretically visible region $\{x:x\in B(0,1)\}$ and $x_{2}>-2\delta\}$ is
only slightly larger than $\Omega_{0},$ since $\delta$ can be chosen to be much smaller than
1. However, the present technique is not theoretically exact.

\begin{figure}[t]
\begin{center}
\subfigure[Initial condition $f(x)$]{
\includegraphics[scale = 0.15]{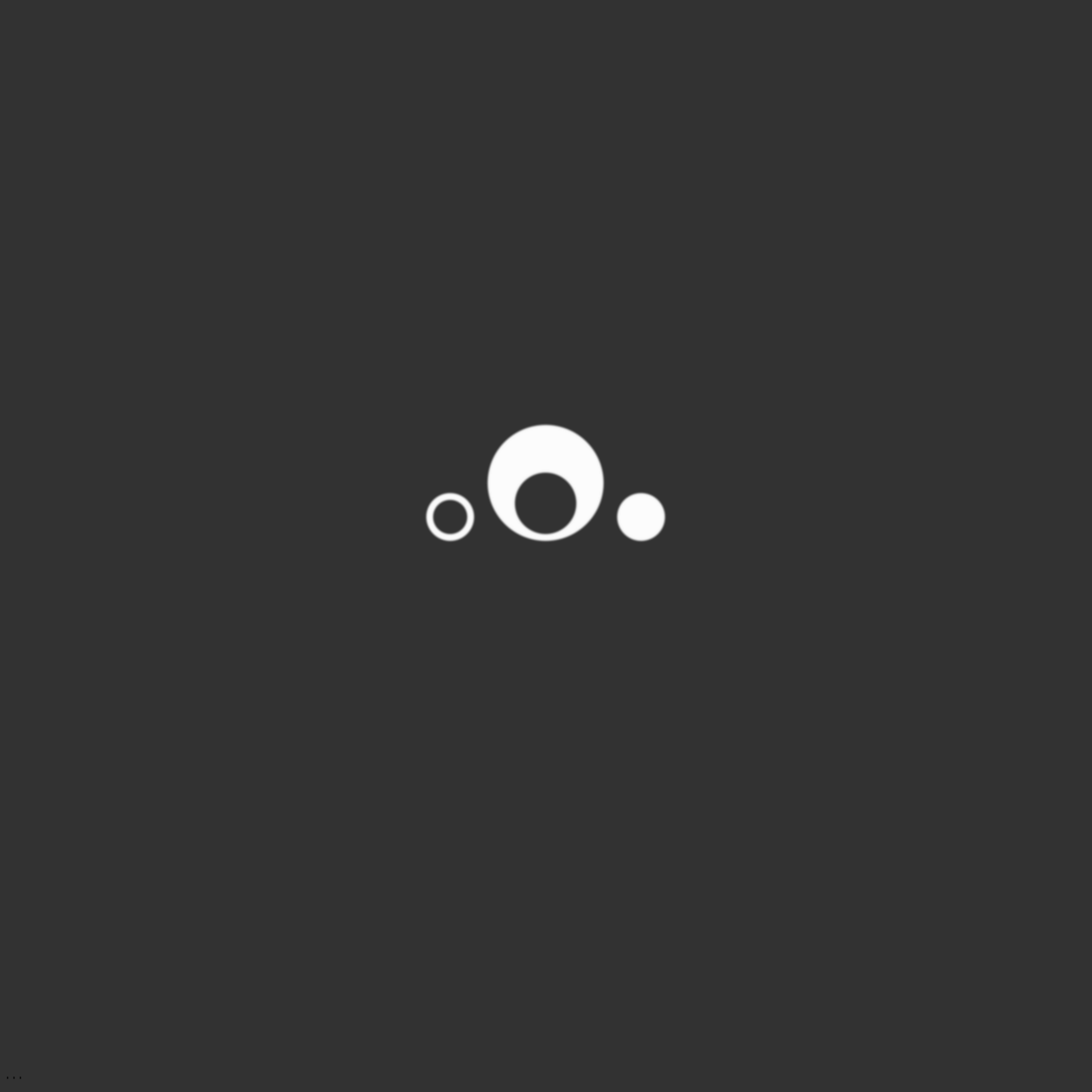} \ \includegraphics[scale = 0.65]{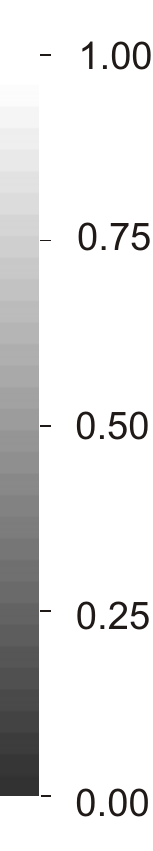}}
\subfigure[Exterior solution $u(t,x)$ at $t=2$ ]{
\includegraphics[scale = 0.15]{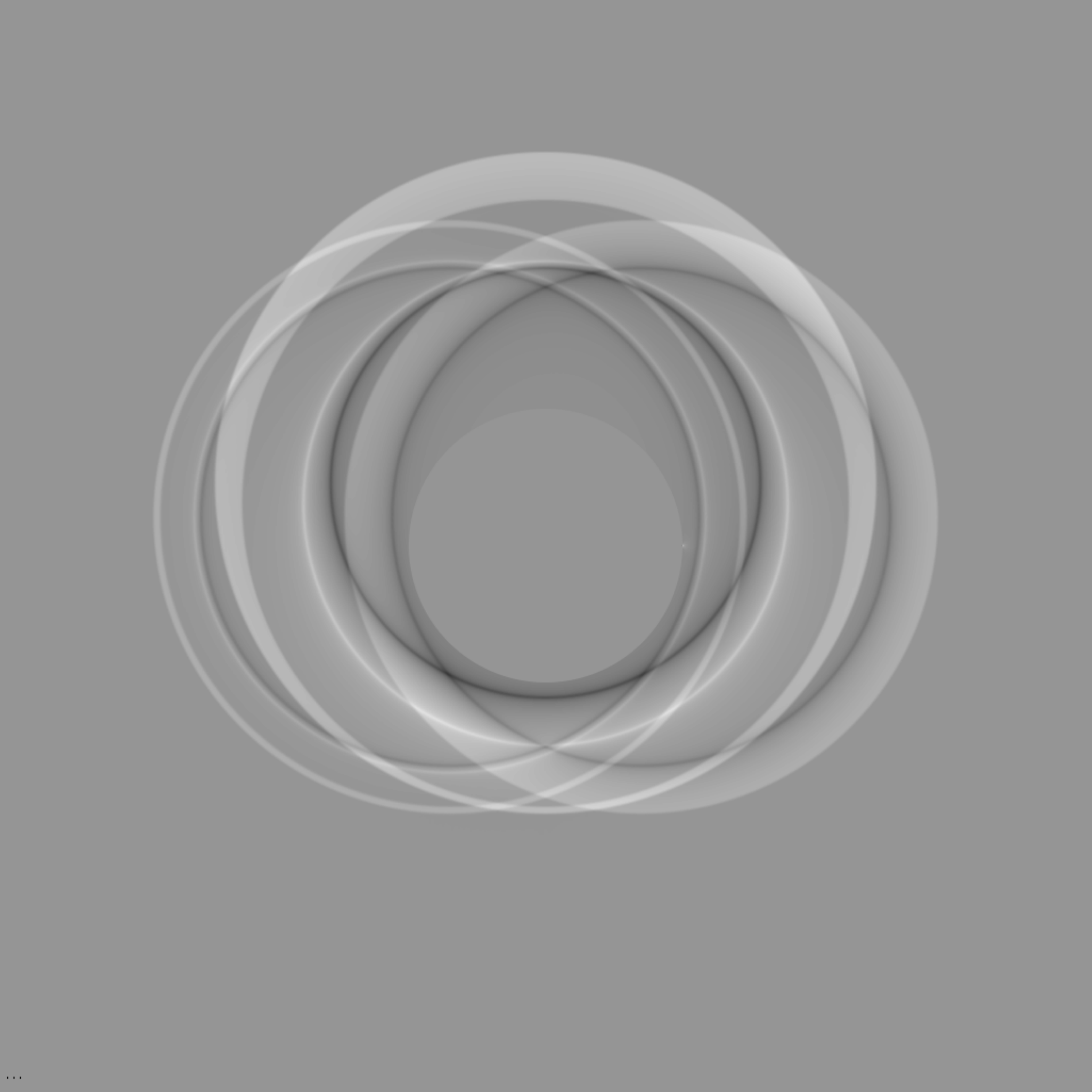} \ \includegraphics[scale = 0.65]{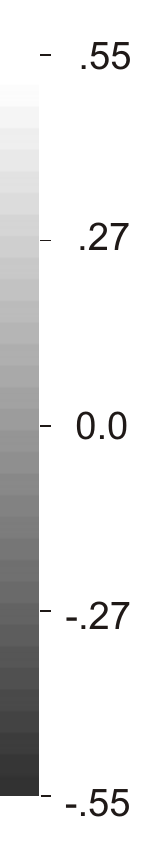}}
\subfigure[Partial data solution $w(t,x)$ at $t=2$ ]{
\includegraphics[scale = 0.15]{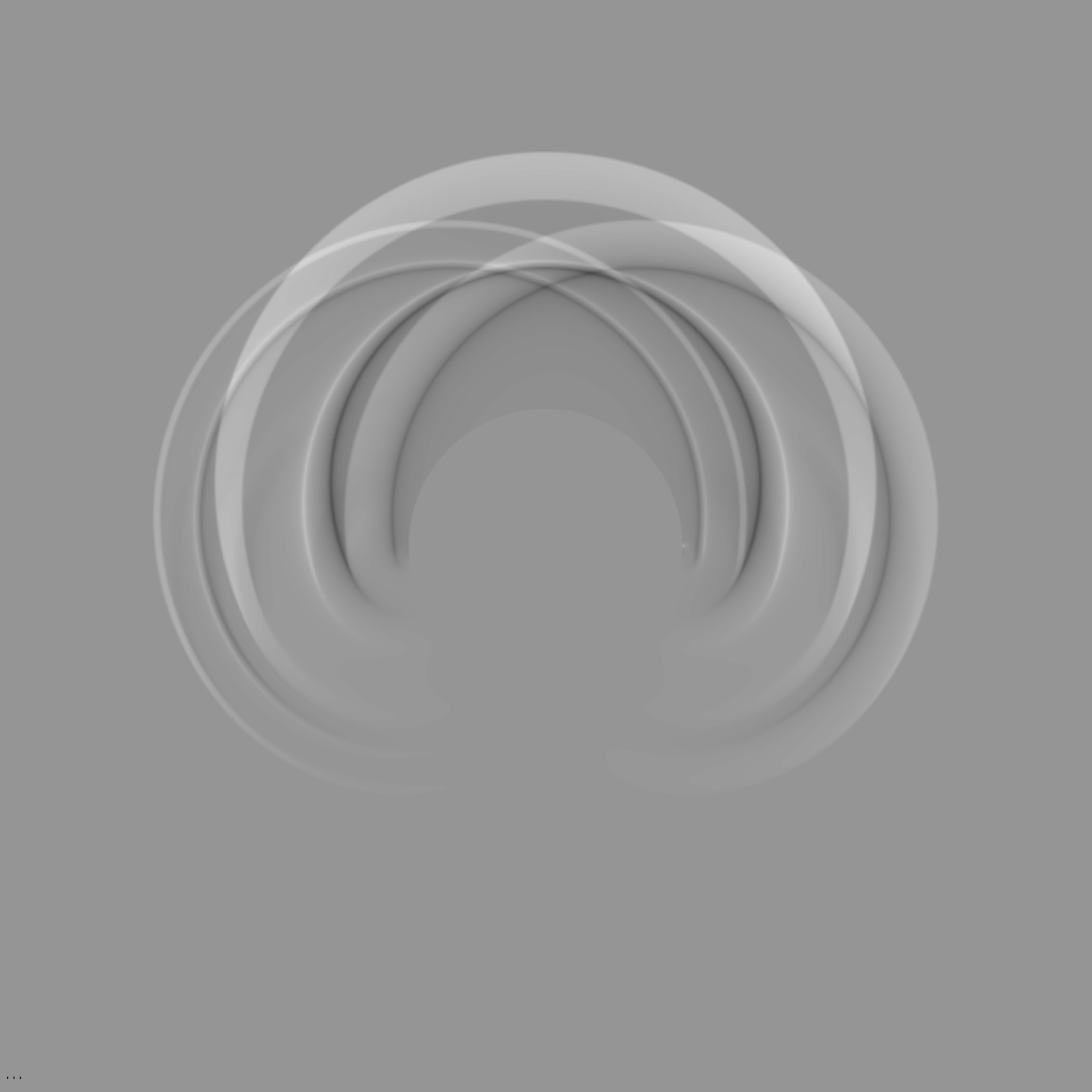} \ \includegraphics[scale = 0.65]{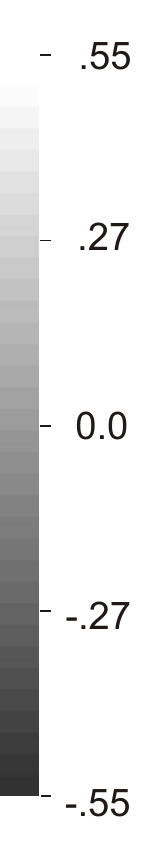}}
\subfigure[Difference $w(t,x)-u(t,x)$ at $t=2$ ]{
\includegraphics[scale = 0.15]{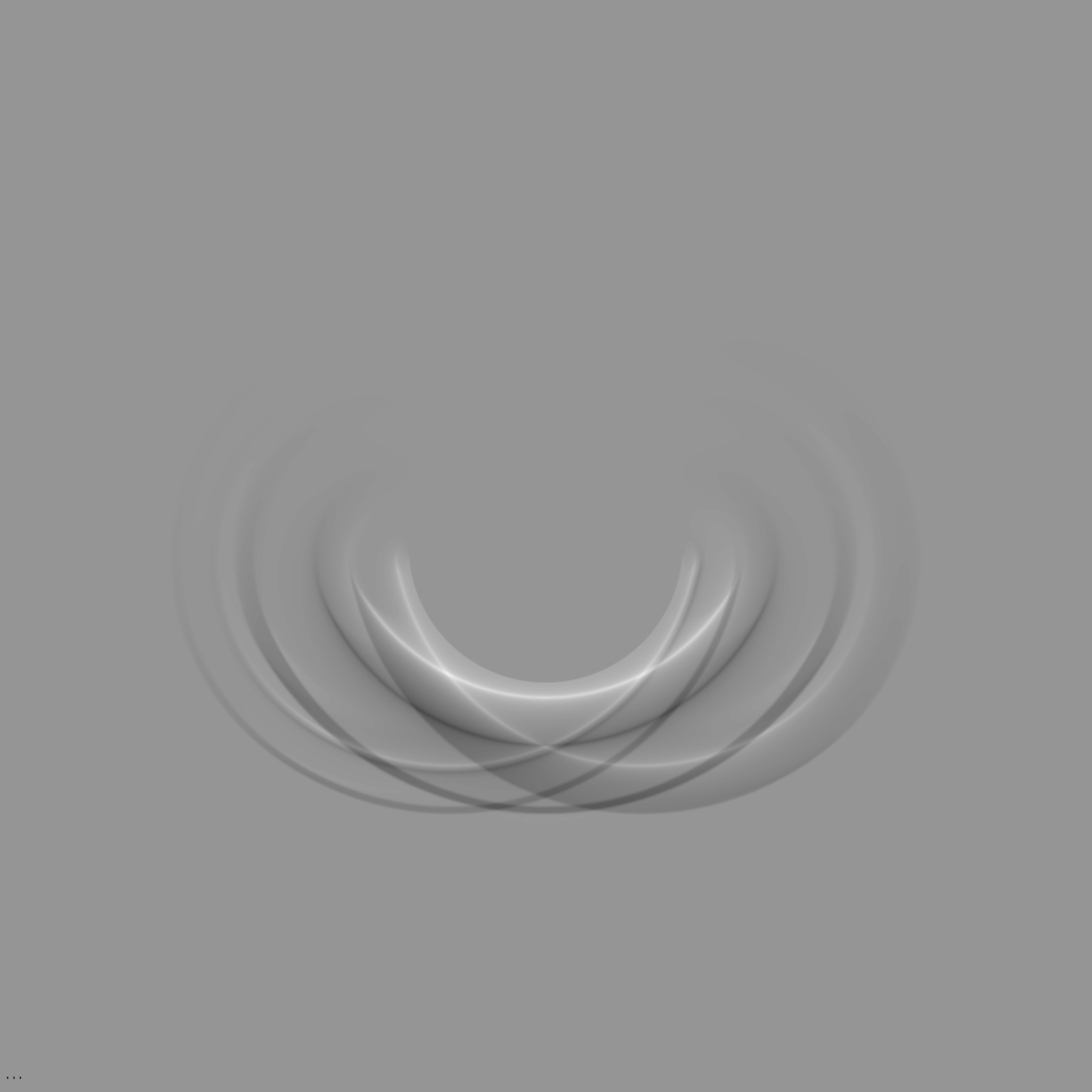} \ \includegraphics[scale = 0.65]{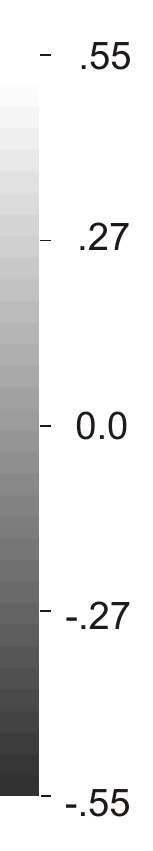}}
\end{center}
\caption{Comparing solution $u(t,x)$ of the direct problem and solution
$w(t,x)$ of the exterior problem with reduced data}%
\label{F:wave}%
\end{figure}

\section{Numerical simulations}

In this section performance of our algorithm is demonstrated in numerical
simulations. The model acquisition scheme can be seen in Figure
\ref{F:circgeom}. Measurements where modeled by using 256 samples over time
interval [0,2] (slightly more were used to produce a smooth cut-off after
$T=2).$ There were 1024 detector locations uniformly distributed over the
angle interval between $0$ and $360^{\circ}.$ These were used to model full
measurements. When we modeled the reduced measurements, all the values
corresponding to angle interval $[190^{\circ},350^{\circ}]$ were set to zero.
We implemented the algorithm described in the previous section in MATLAB. The
time required to compute $\frac{\partial}{\partial p}F(\omega,p)$ on a
$256\times1024$ grid using one thread of the Intel Xeon processor E5620
running at 2.4 GHz was about 1 sec. About 85\% of that time was used to
compute values of the Hankel functions utilized by the algorithm. This suggests
that, if needed, computations can be significantly sped up by either pre-computing
the Hankel functions, or by implementing the algorithm in C or Fortran
that compute these functions significantly faster (cf. the computing times reported in~\cite{Kun-cyl}).
\begin{figure}[t]
\begin{center}
\subfigure[Projections  reconstructed from full data $g(\omega,p)$.]{
\includegraphics[height = 1.4in , width = 2.64in]{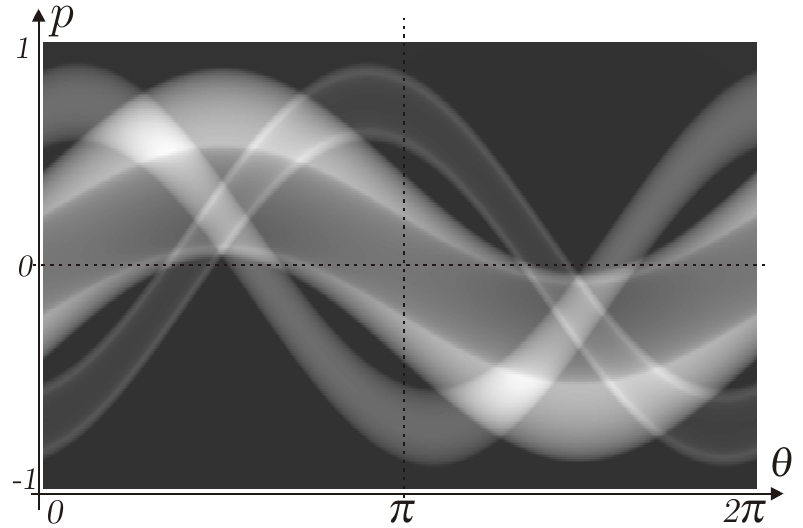} \phantom{aa} \includegraphics[scale = 0.52]{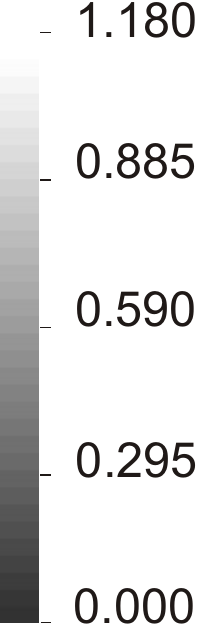}}
\newline
\subfigure[Approximate projections $(\mathcal{R}w(2,x))(\omega(\theta),p)$]{
\includegraphics[height = 1.4in , width = 2.64in]{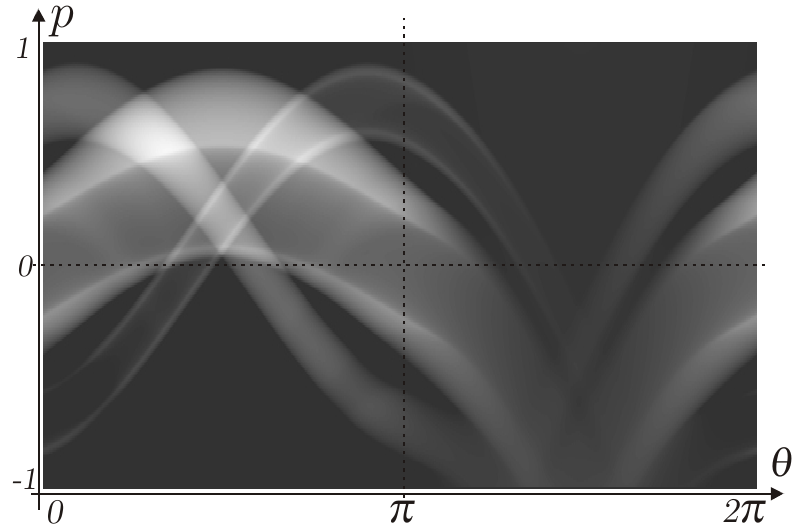} \phantom{aa} \includegraphics[scale = 0.52]{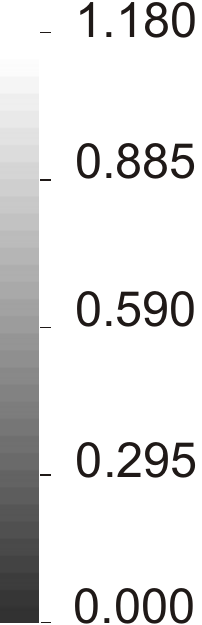}}
\newline
\subfigure[Final approximation $G(\omega(\theta),p)$]{
\includegraphics[height = 1.4in , width = 2.64in]{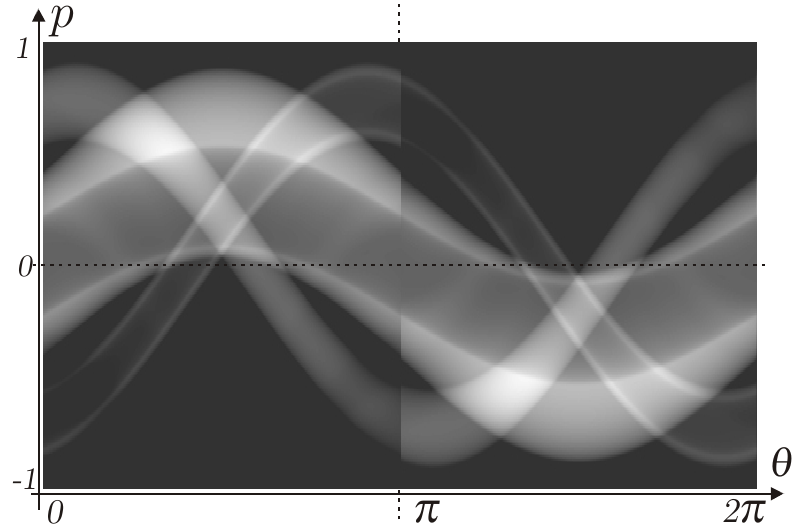} \phantom{aa} \includegraphics[scale = 0.52]{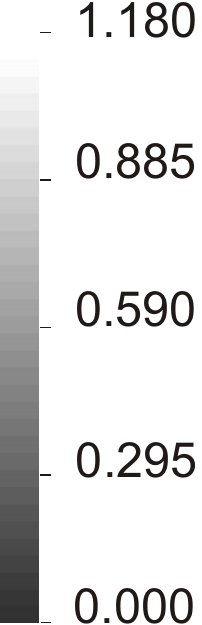}}
\newline
\end{center}
\caption{Comparing exact projections and the approximations obtained by the
present technique. Here $\omega(\theta) = (\cos(\theta), \sin(\theta) )$}%
\label{F:projections}%
\end{figure}

Figure \ref{F:wave}(b) presents the solution $u(t,x)$ at $t=2$ of the problem
(\ref{E:fulldataexterior}) with full data. Recall that theoretically, in
$\mathbb{R}^{d}\backslash B(0,1)$ this function coincides with the solution of
the forward problem (\ref{E:originalwave}), (\ref{E:originalBC}) with the
initial condition $f(x)$ shown in part (a) of the figure. Solution of the
initial/boundary problem (\ref{E:Eller2}) with reduced data is demonstrated in
Figure \ref{F:wave}(c). The difference $w(2,x)-u(2,x)$ (i.e. the error induced
by the loss of data) is plotted in the part (d) of the figure. One can see
that the singularities in this difference are all located within the lower
half space. Moreover, the error within the upper half-space is smooth and
quite small.

Figure \ref{F:projections}(a) presents the Radon projections $F(\omega,p)$
computed using our algorithm from the full data. Our algorithm is
theoretically exact in this case, and $F(\omega,p)$ is indistinguishable from
the exact Radon projections. Figure \ref{F:projections}(b) demonstrates
approximate projections $\tilde{F}(\omega(\alpha),p)$ computed from the
reduced data. One can see that on the interval $\alpha\in\lbrack0,\pi]$ the
approximate projections are quite close to $F(\omega,p).$ Part (c) of this
Figure shows projections $G(\omega(\alpha),p)$ computed from $\tilde{F}%
(\omega(\alpha),p)$ using symmetry relation (\ref{E:G}). One can notice a
small jump of values occurring at $\alpha=\pi.$ However, according to
Proposition \ref{T:final} this singularity will not propagate into the
reconstructed image. \begin{figure}[t]
\begin{center}
\subfigure[Reconstruction from full data]{
\includegraphics[scale = 0.2]{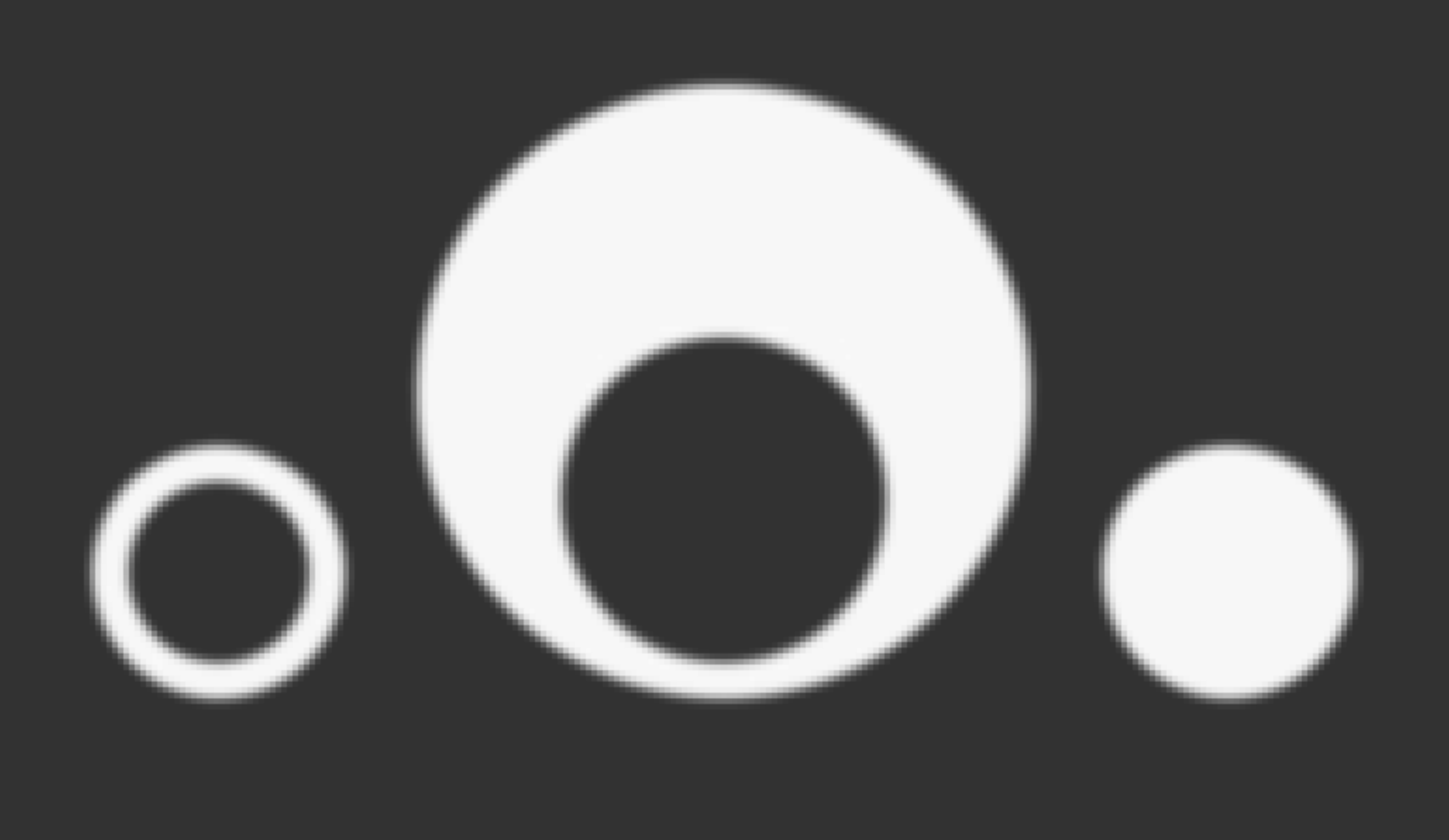} \ \includegraphics[scale = 0.36]{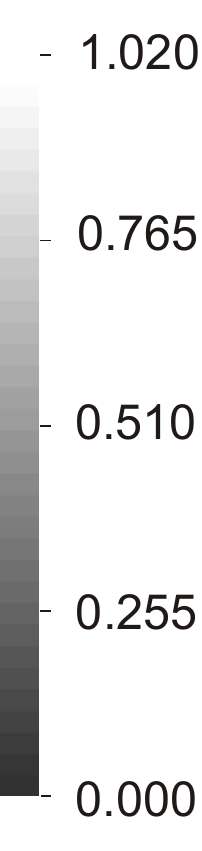}}
\subfigure[Na\"{i}ve reconstruction from reduced data]{
\includegraphics[scale = 0.2]{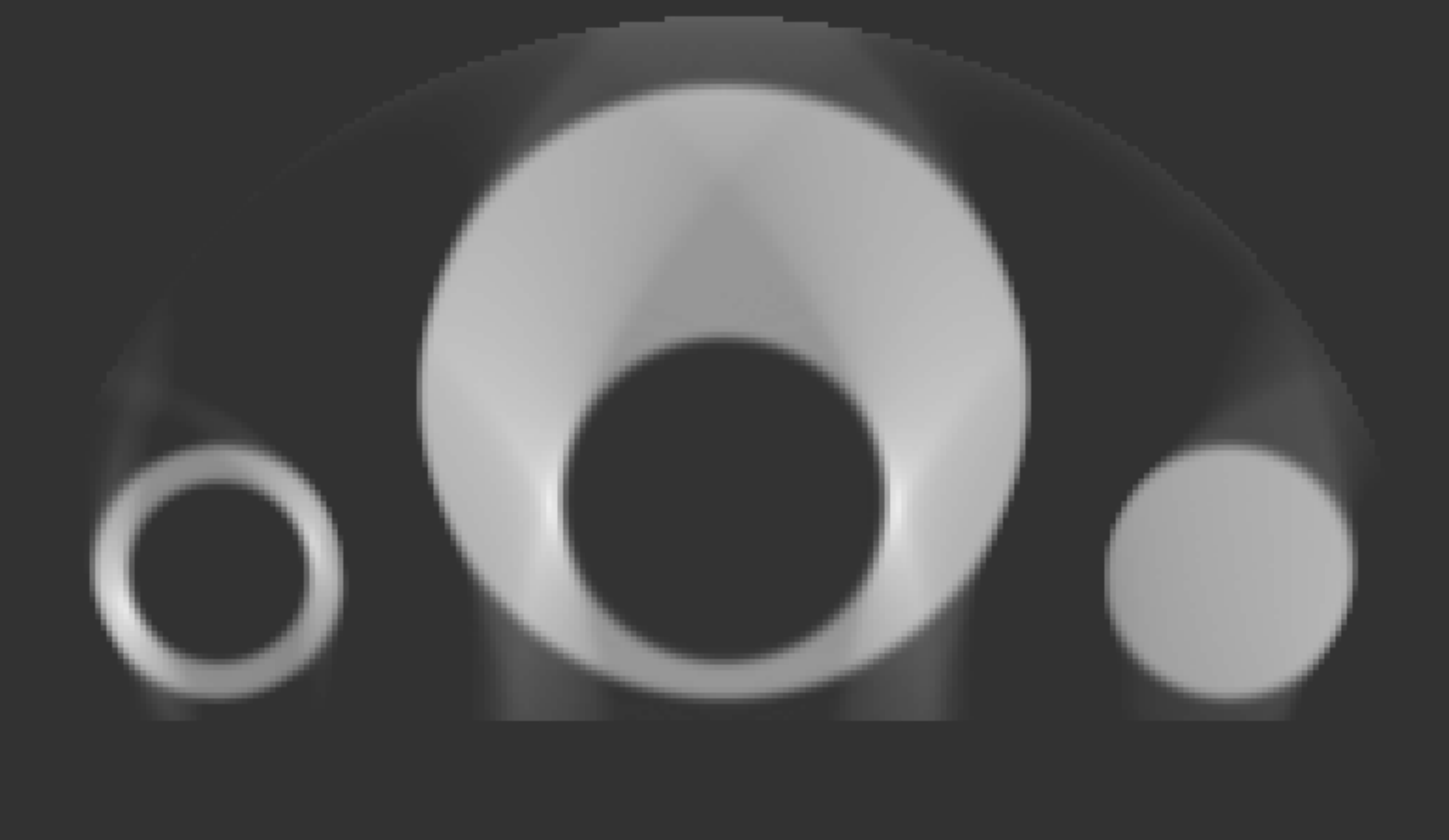} \ \includegraphics[scale = 0.36]{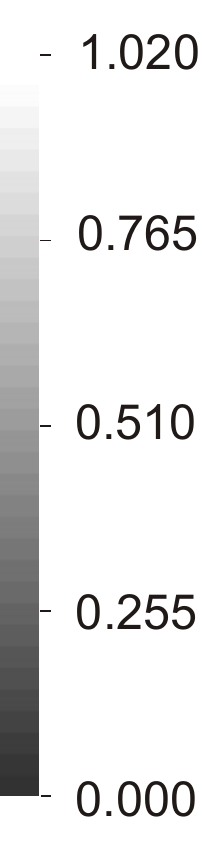}}
\subfigure[Present method using reduced data]{
\includegraphics[scale = 0.2]{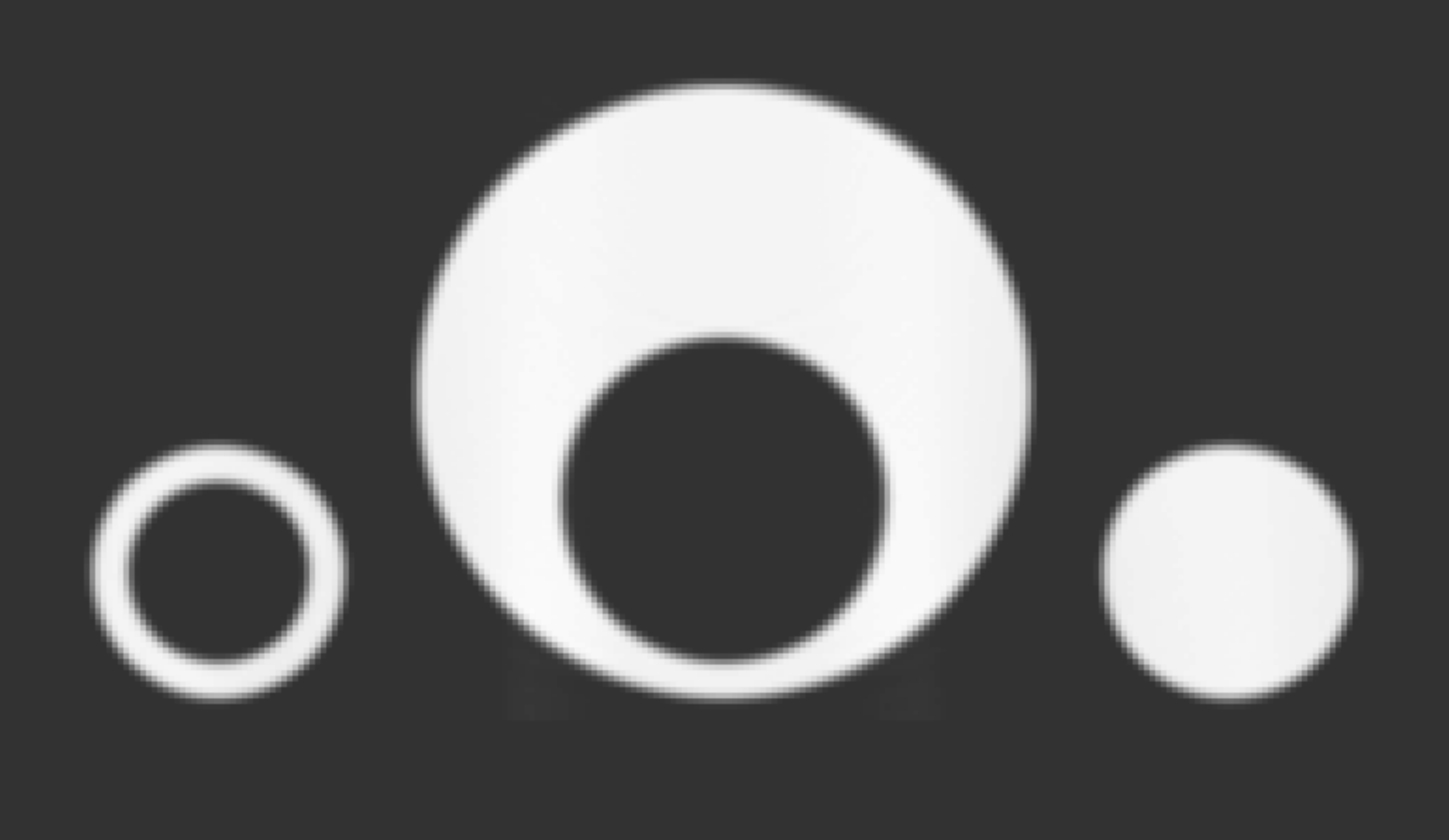} \ \includegraphics[scale = 0.36]{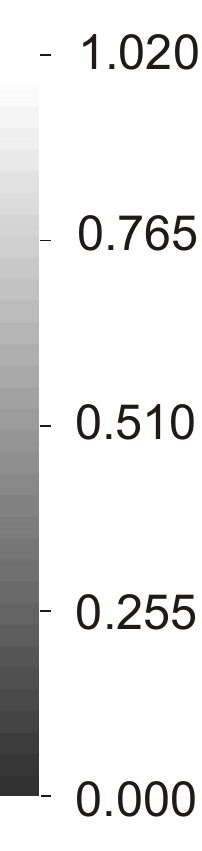}}
\subfigure[Diffirence between subfigures (a) and (c)]{
\includegraphics[scale = 0.2]{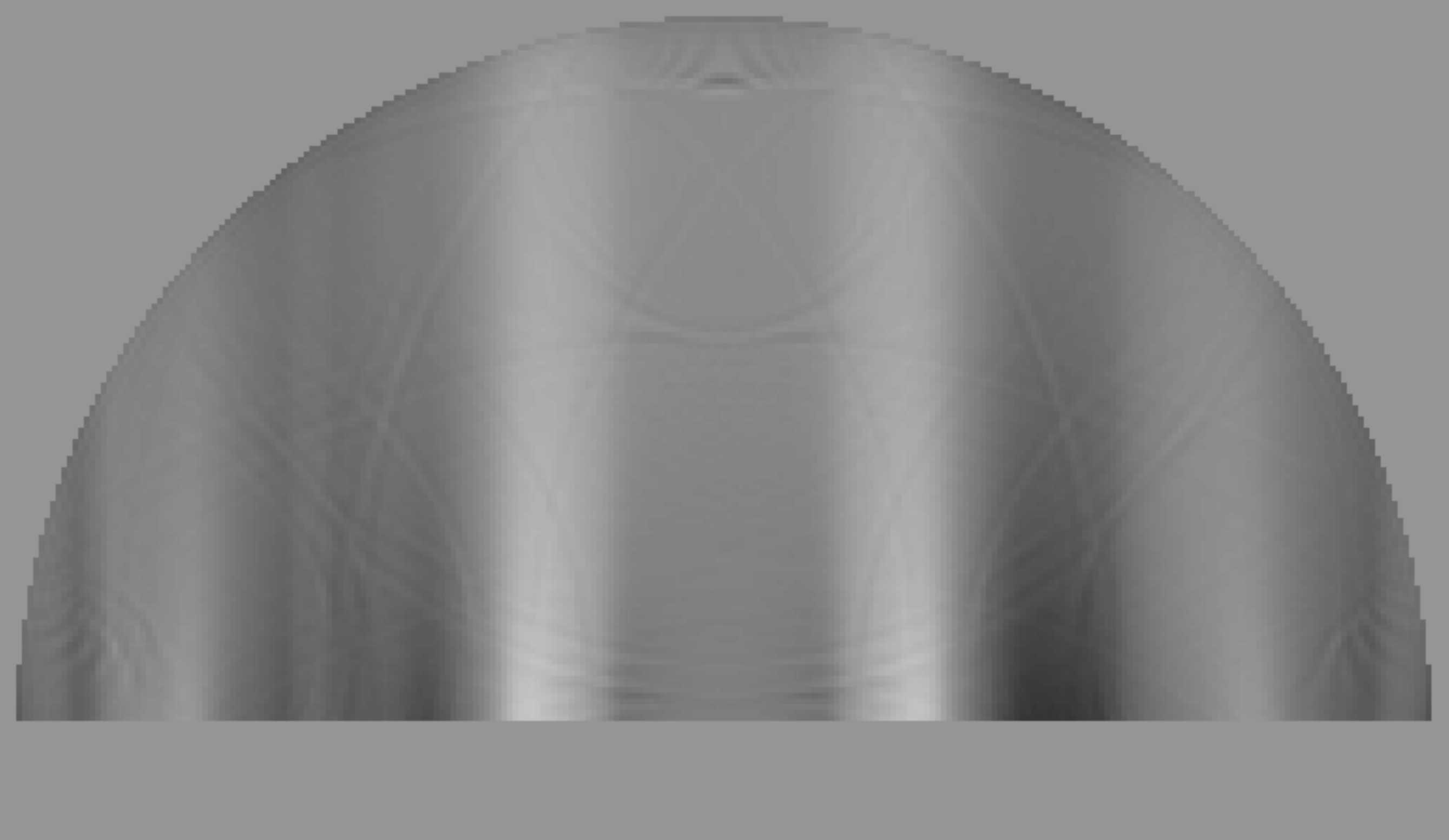} \ \includegraphics[scale = 0.36]{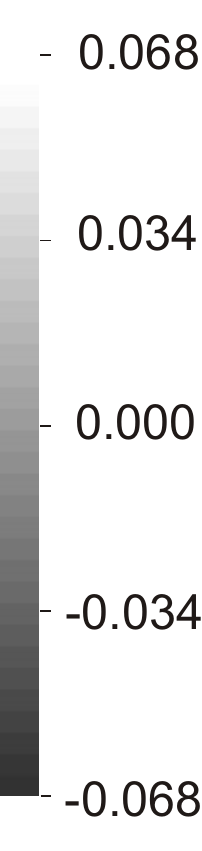}}
\end{center}
\caption{Comparing the results of the present technique with the full-data
reconstruction (subfigure (a)) and with a na\"{i}ve reconstruction not taking into
account the loss of data}%
\label{F:reconstructions}%
\end{figure}

\begin{figure}[t]
\begin{center}
\subfigure[Horizontal cross sections ]{
\includegraphics[width=3.0in,height=1.1in]{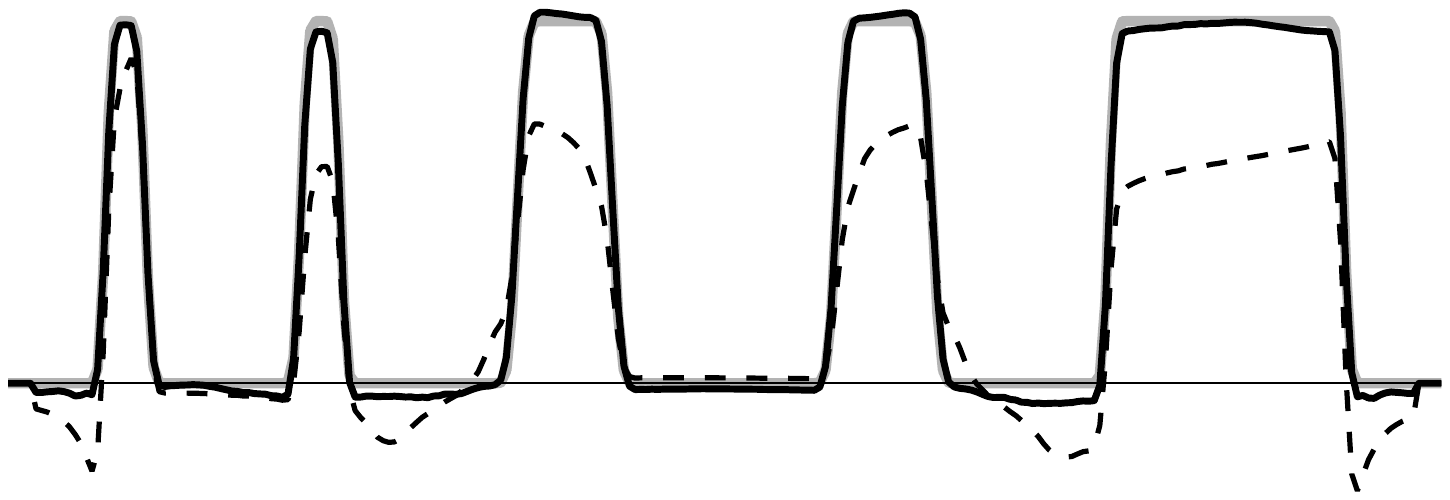}}
\subfigure[Vertical central cross sections ]{
\includegraphics[width=1.8in,height=1.08in]{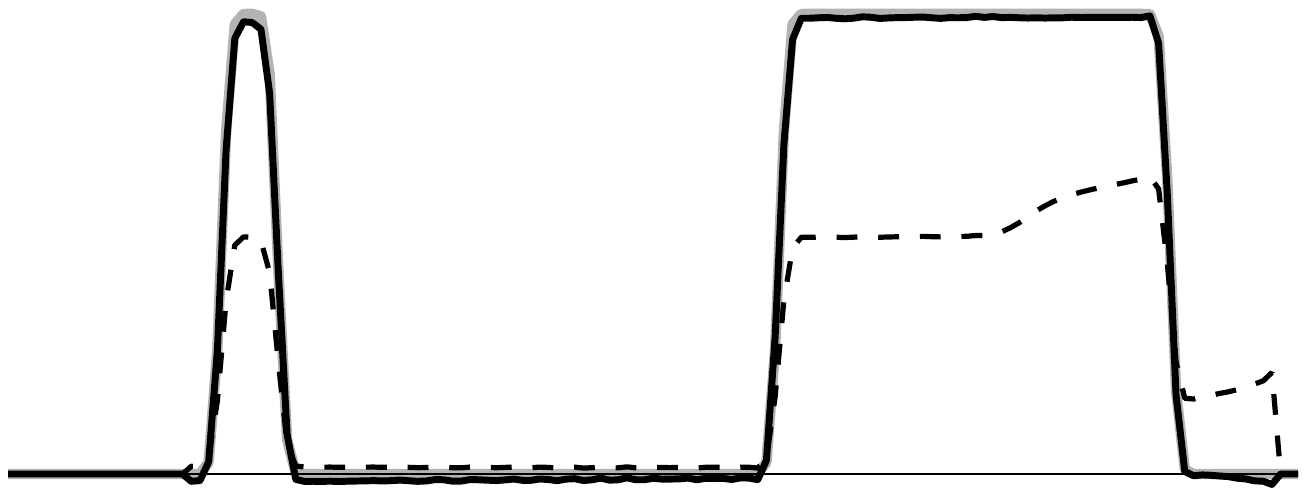}}
\end{center}
\caption{Cross sections of the phantom $f(x)$ and its reconstructions. The
gray line is the phantom, the black line represents the result obtained by the
present technique, and the dashed line shows the na\"{i}ve reconstruction}%
\label{F:profiles}%
\end{figure}

Figures \ref{F:reconstructions} and \ref{F:profiles} demonstrate the results
of applying the inverse Radon transform to the correct projections
$F(\omega,p)$ and to our approximation $G(\omega(\alpha),p).$ In detail,
theoretically exact reconstruction $\mathcal{R}^{-1}F$ is shown in Figure
\ref{F:reconstructions}(a). Part (b) shows the image obtained by a na\"{i}ve
reconstruction from reduced data; it was computed using the fast 2D algorithm
from \cite{Kun-cyl}. Part (c)\ of Figure \ref{F:reconstructions} presents
microlocally accurate reconstruction $\mathcal{R}^{-1}G.$ Visually it differs
little from $\mathcal{R}^{-1}F$ exhibited in part (a). The error is about 6\%
in the relative $L_{\infty}$ norm and about 3\% in the relative $L_{2}$
norm. The difference $\mathcal{R}^{-1}F-\mathcal{R}^{-1}G$ can be seen in
Figure \ref{F:reconstructions}(d), plotted using a much finer color scale. It
appears to be smooth, in accordance with the analysis presented in the
previous sections.

We have also plotted in Figure \ref{F:profiles} the graphs of the horizontal
(along the line $x_{2}=0.2$) and central vertical cross-sections the phantom
$f(x)$ and images shown in Figures \ref{F:reconstructions} (b) and (c). One
can see that while the na\"{i}ve reconstruction is incorrect even qualitatively,
our technique yields a close fit to the ground truth $f(x).$

\textbf{Acknowledgements} The authors would like to thank L. Friedlander and E.T.
Quinto for helpful discussions and useful literature references. The
last two authors gratefully acknowledge partial support from the NSF, award
NSF/DMS 1814592.
M. Eller and L.Kunyansky are grateful to the University of L\"ubeck for
organizing the workshop ``Modeling, analysis, and approximation theory toward applications in tomography and inverse problems''  in June 2016 supported by the DFG (German Science Foundation), that initiated their collaboration.

\bibliographystyle{alpha}
\bibliography{leonid,article}

\end{document}